\def \vol {\operatorname{vol}}
\def\titlerunning#1{\gdef\titrun{#1}}
\def\author#1{\gdef\autrun{\def\and{\unskip, }#1}\gdef\@author{#1}}
\def\email#1{e-mail: #1}
\def\keywords#1{\par\medskip
\noindent\textbf{Keywords.} #1}
\newcommand{\R}{\mathbb{R}}
\newtheorem{teo}{Theorem}[section]
\newtheorem{lem}[teo]{Lemma}
\newtheorem{rmk}[teo]{Remark}
\newtheorem{defin}[teo]{Definition}
\begin{document}

\baselineskip=17pt
\titlerunning{$L_p$ functional Busemann-Petty centroid inequality}
\title{$L_p$ functional Busemann-Petty centroid inequality}


\author{J. E. Haddad\thanks{Departamento de Matem\'atica, ICEx,  Universidade Federal de Minas Gerais, 30123-970, Belo Horizonte, Brasil; \email{jhaddad@mat.ufmg.br}}, \quad C. H. Jim\'enez\thanks{Departamento de Matem\'atica, Pontif\'icia Universidade Cat\'olica do Rio de Janeiro, 22451-900   Rio de Janeiro, Brasil [\textbf{corresponding author}]; \email{hugojimenez@mat.puc-rio.br}}, \quad L. A. Silva\thanks{Pontif\'icia Universidade Cat\'olica do Rio de Janeiro, Departamento de Matem\'atica, 22451-900, Rio de Janeiro, Brasil. Professora do Ensino B\'asico, T\'ecnico e Tecnol\'ogico no IFMG - Campus Bambu\'i; \email{leticia.alves@ifmg.edu.br}}}

\date{}

\maketitle

%
%

\vspace{0.5cm}
\abstract{
If $K\subset\mathbb{R}^n$ is a convex body and $\Gamma_pK$ is the $p$-centroid body of $K$, the $L_p$ Busemann-Petty centroid inequality states that $\vol(\Gamma_pK) \geq \vol(K)$, with equality if and only if $K$ is an ellipsoid centered at the origin. In this work, we prove inequalities for a type of functional $r$-mixed volume for $1 \leq r < n$, and establish as a consequence, a functional version of the $L_p$ Busemann-Petty centroid inequality. \keywords{Convex body, Moment body, Busemann-Petty centroid} }

\section{Introduction}

The study of affine isoperimetric inequalities on one side and affine Sobolev inequalities for functions on $\R^n$ on the other is connected to a great extent. The equivalence of the classical isoperimetric inequality and the classical $L_1$ Sobolev inequality has been known for quite some time (see for example\cite{aubin1976problemes,talenti1976best,federer1960normal,yuburago1988,osserman1978isoperimetric,maz1960classes,federer1969}). Following this path Zhang in \cite{Zaffsob} established the equivalence of an affine $L_1$ Sobolev inequality with the Petty Projection inequality for convex bodies. Some time after, along with Lutwak and Yang continued in this direction obtaining $L_p$ versions of the mentioned equivalence. These authors developed around the same time a rich theory of geometrical inequalities for centroid bodies and established $L_p$ extensions of many other fundamental parameters in Convex Geometry, such as mixed volume and surface area. 
 
On top of the strong connections mentioned above, other geometrical inequalities of isoperimetric flavour like the Busemann-Petty centroid inequality or Blaschke-Santal\'o, among others, have been fundamental in the study of several inequalities of Sobolev type, like $L_p$ log-Sobolev, Gagliardo-Nirenberg, Sobolev trace or weighted Sobolev inequalities (e.g \cite{HJMaffsobBP,de2018sharp,haddad2017sharp,haddad2018asymmetric,haberl2009asymmetric,HSglpaffiso}). It is important to notice that in many of the works mentioned above, where the Busemann-Petty centroid inequality was used to recover some known results for Sobolev type inequalities, this inequality provided a more direct approach. This approach often went around the use (in their original proofs) of other well known tools in the area of convex geometric analysis like the Minkowski problem or the theory of mixed or dual mixed volumes. 

In this work we continue with this line of research. We obtain a family of inequalities for functions on $\R^n$, inequalities of Sobolev type, and that in particular recover the $L_p$ Busemann-Petty centroid inequality for convex bodies in $\R^n$. Our main inequality is presented in the form of a functional mixed volume inequality.
\begin{teo}
	\label{mainthm}
Let $f$ be a $C^1$ function and $g$ a continuous non-negative function, both with compact support in $\mathbb{R}^n$, then for $1 \leq r < n$, $q = \frac{nr}{n-r}$ and $\lambda \in \left(\frac{n}{n+p}, 1\right)\cup (1, \infty)$,
	\begin{equation}
		\label{mainthm_ineq}
		\int_{\mathbb{R}^n}\left(\int_{\mathbb{R}^n}g(y)|\langle \nabla f(x), y \rangle|^pdy\right)^{r/p}dx \geq C_{n,p, \lambda} ||g||^\frac{[(n+p)(\lambda - 1) + p]r}{np(\lambda - 1)}_1||g||^{-\frac{\lambda r}{(\lambda-1)n}}_{\lambda}||f||^r_{q}.
	\end{equation}
	The sharp constant $C_{n,p, \lambda}$ is computed in Section $3$ and equality is attained if and only if $f$ and $g$ have the following forms
		\[g(x) = aG_{p, \lambda}(||Ax||_2)\]
		\[f(x) = bF_r(||A x||_2)\]
	for positive constants $a, b$, $A \in GL_n(\mathbb{R})$, $G_{p,\lambda}:\R_+ \to \R$ defined by
	\begin{equation*}
	G_{p,\lambda}(t) = \left\{
	\begin{array}{cc}
	(1+t^p)^{\frac 1{\lambda-1}} & \hbox{ if } \lambda \in \left(\frac{n}{n+p}, 1\right) \\
	(1-t^p)_+^{\frac 1{\lambda-1}} & \hbox{ if } \lambda > 1,
	\end{array}
	\right.
	\end{equation*}
	and \[F_r(t) = (1+t^{\frac r{r-1}} )^{1-\frac rn}.\] 
\end{teo}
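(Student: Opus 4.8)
The plan is to read the left-hand side of \eqref{mainthm_ineq} as an anisotropic Sobolev energy of $f$ driven by a convex body associated with $g$, and then to control that body by the geometric $L_p$ Busemann-Petty centroid inequality applied to the level sets of $g$. This splits the proof into a ``Sobolev step'' (in which $F_r$ appears) and a ``functional Busemann-Petty step'' (in which $G_{p,\lambda}$ appears), after which one assembles the sharp constant and reads off the equality cases.

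For the finite measure $g(y)\,\d y$ introduce the $L_p$ moment body $M_pg$, the convex body with support function
\[ h_{M_pg}(\xi)^p=\int_{\R^n}g(y)\,|\langle\xi,y\rangle|^p\,\d y. \]
The right-hand side is an even sublinear function of $\xi$ (Minkowski's inequality in $L^p(g\,\d y)$), so this really is a support function and $h_{M_pg}$ is the gauge of the polar body $(M_pg)^\circ$. With this notation the left-hand side of \eqref{mainthm_ineq} is exactly $\int_{\R^n}h_{M_pg}(\nabla f(x))^{\,r}\,\d x$, and since $\xi\mapsto h_{M_pg}(\xi)$ is a norm with unit ball $(M_pg)^\circ$, the sharp anisotropic Sobolev inequality obtained by convex symmetrization (Alvino, Ferone, Lions and Trombetti) gives, for $1\le r<n$ and $q=\tfrac{nr}{n-r}$,
\[ \int_{\R^n}h_{M_pg}(\nabla f)^{\,r}\,\d x\ \ge\ c_{n,r}\,\vol\!\big((M_pg)^\circ\big)^{-r/n}\,\|f\|_q^{\,r}, \]
with equality exactly when $f$ is the extremal profile of that inequality adapted to $(M_pg)^\circ$; when $(M_pg)^\circ$ is a centered ellipsoid $\{\xi:\|A\xi\|_2\le1\}$ this profile is a dilate of $x\mapsto F_r(\|Ax\|_2)$.

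The core of the argument is then the purely one-function inequality
\[ \vol\!\big((M_pg)^\circ\big)^{-r/n}\ \ge\ \kappa_{n,p,\lambda}\ \|g\|_1^{\,\alpha}\,\|g\|_\lambda^{\,\beta},\qquad \alpha=\tfrac{[(n+p)(\lambda-1)+p]r}{np(\lambda-1)},\quad \beta=-\tfrac{\lambda r}{(\lambda-1)n}, \]
with equality iff $g=a\,G_{p,\lambda}(\|A\,\cdot\,\|_2)$. I would prove it via the layer-cake decomposition $g=\int_0^\infty\mathbf 1_{K_t}\,\d t$ with $K_t=\{g>t\}$, which gives $h_{M_pg}^p=\int_0^\infty h_{M_p\mathbf 1_{K_t}}^p\,\d t$, i.e. $M_pg$ is the $L_p$-Firey integral of the bodies $M_p\mathbf 1_{K_t}=\gamma_{n,p}\vol(K_t)^{1/p}\,\Gamma_pK_t$. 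The geometric $L_p$ Busemann-Petty centroid inequality $\vol(\Gamma_pK_t)\ge\vol(K_t)$ holds layer by layer, but $\vol((\,\cdot\,)^\circ)$ is not linear along $L_p$-Firey integration and an $L_p$-Firey integral of ellipsoids need not be an ellipsoid, so a levelwise comparison does not suffice; instead one runs the $L_p$-Steiner symmetrization / shadow-system argument that underlies the geometric inequality directly on $g$, concluding that $\vol((M_pg)^\circ)$ is not diminished by replacing $g$ by its radially symmetric decreasing rearrangement $g^\star$, which has the same $\|\cdot\|_1$ and $\|\cdot\|_\lambda$. For radial $g$ the body $M_pg$ is a Euclidean ball of radius $\big(c_{n,p}\int_0^\infty g(s)\,s^{n+p-1}\,\d s\big)^{1/p}$, so the inequality reduces to a one-dimensional statement: a lower bound for the $(n+p)$-th moment of $g$ in terms of $\int_0^\infty g\,s^{n-1}\,\d s$ and $\int_0^\infty g^\lambda s^{n-1}\,\d s$, meaningful precisely for $\lambda\in(\tfrac n{n+p},1)\cup(1,\infty)$, the range in which these moments are finite for the extremal profile. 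A Lagrange-multiplier computation identifies the extremizer as $a\,G_{p,\lambda}(\,\cdot\,/s_0)$, the two branches in the definition of $G_{p,\lambda}$ corresponding to the sign of $\lambda-1$, and fixes $\kappa_{n,p,\lambda}$.

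Chaining the two inequalities yields \eqref{mainthm_ineq} with $C_{n,p,\lambda}=c_{n,r}\,\kappa_{n,p,\lambda}$; equality forces equality in both steps, so $g=a\,G_{p,\lambda}(\|A\,\cdot\,\|_2)$, whence $(M_pg)^\circ$ is a centered ellipsoid and $f=b\,F_r(\|A\,\cdot\,\|_2)$ for the same $A$ (after matching the dilation parameters). I expect the main obstacle to be the third step: transferring the geometric $L_p$ Busemann-Petty centroid inequality through the layer-cake decomposition of $g$, that is, controlling the volume of the polar of an $L_p$-Firey integral of moment bodies — precisely the point where the ellipsoidal equality case of the geometric inequality is lost and a genuine symmetrization argument, rather than a layerwise one, is required.
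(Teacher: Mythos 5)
Your overall architecture matches the paper's: an anisotropic Sobolev step bounding $\int_{\R^n} h_{M_pg}(\nabla f)^r\,dx$ below by $\|f\|_q^r$ times a power of a volume attached to $M_pg$, followed by a lower bound for that volume in terms of $\|g\|_1$ and $\|g\|_\lambda$ via the level sets of $g$. But there are two problems. The first is a (fixable) misstatement of the Sobolev step: for $H=h_{M_pg}=\|\cdot\|_{(M_pg)^\circ}$ the sharp constant is proportional to $\vol(M_pg)^{r/n}$, not to $\vol((M_pg)^\circ)^{-r/n}$ --- test it on the extremal $f=F_r(\|x\|_{M_pg})$, where both $\int h_{M_pg}(\nabla f)^r\,dx$ and $\|f\|_q^q$ are proportional to $\vol(M_pg)$ and the polar volume never enters; the two expressions agree only up to the (non-constant) volume product. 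Consequently your ``core one-function inequality'' is aimed at the wrong quantity: the correct target is the paper's Theorem \ref{taux}, a \emph{lower} bound for $\vol(M_pg)^{p/n}$.

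The genuine gap is in that middle step. You explicitly dismiss the levelwise reduction (``$\vol((\cdot)^\circ)$ is not linear along $L_p$-Firey integration \dots\ so a levelwise comparison does not suffice'') and defer to an unexecuted $L_p$-Steiner/shadow-system argument run directly on $g$; you yourself flag this as the main obstacle. In fact, once the target is $\vol(M_pg)$ rather than its polar, the Firey integral $h_{M_pg}^p=\int_0^\infty h_{M_pN_{g,t}}^p\,dt$ \emph{can} be linearized: writing $\vol(M_pg)=V_p(M_pg,M_pg)=\int_0^\infty V_p(M_pg,M_pN_{g,t})\,dt$ and applying the $L_p$ Minkowski inequality $V_p(K,L)\geq\vol(K)^{(n-p)/n}\vol(L)^{p/n}$ to each term gives $\vol(M_pg)^{p/n}\geq\int_0^\infty\vol(M_pN_{g,t})^{p/n}\,dt$. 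After that, the geometric Busemann-Petty centroid inequality applies level by level (one needs its extension to compact domains, Lemma \ref{bpdc}, since $N_{g,t}$ is not convex --- a point your sketch also skips), and the one-dimensional moment inequality you correctly anticipate (Lemma \ref{lvnp}) finishes the proof; the equality analysis propagates levelwise as well, since equality in the $L_p$ Minkowski step forces all the $M_pN_{g,t}$ to be dilates of one another. So no symmetrization of $g$ is required, and the step you leave to an analogy with the shadow-system proof --- precisely the geometric core of the theorem --- is where your argument is missing.
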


\section{Some notations and tools from Convex Geometry}

In order to show the intrinsic geometric nature of inequality \eqref{mainthm_ineq}, and in particular, its relation to the $L_p$ Busemann-Petty centroid inequality, let us first recall some basic definitions. A convex body is a convex set $K \subset \mathbb{R}^n$ which is compact and has non-empty interior. For a convex body $K$, its support function $h_K$, which uniquely characterizes it, is defined as
\begin{equation*}
h_K(x) = \max\{\langle x, y\rangle: y \in K \}.
\end{equation*}
If $K$ contains the origin in the interior, then we also have the gauge $\|\cdot\|_K$ and radial $r_K(\cdot)$ functions of $K$ defined respectively as
\[
\|y\|_K:=\inf\{\lambda>0 :\  y\in \lambda K\}\, ,\quad y\in\R^n\setminus\{0\}\, ,
\]

\[
r_K(y):=\max\{\lambda>0 :\ \lambda y\in K\}\, ,\quad y\in\R^n\setminus\{0\}\, .
\]
Clearly, $\|y\|_K=\frac{1}{r_K(y)}$.

For a convex body $K \subset \mathbb{R}^n$ and $p \geq 1$, its $L_p$-moment and $L_p$-centroid bodies, denoted by $M_pK$ and $\Gamma_p K$, are defined by their support functions
\begin{equation}
	\label{def_GpK}
h_{M_pK}(x)^p = \int_{K}|\langle x, y \rangle|^pdy,\quad \mbox{ and }\quad h_{\Gamma_p K}(x)^p =\frac{1}{\vol(K)c_{n,p}} \int_{K}|\langle x, y \rangle|^pdy,
\end{equation}
respectively, where $c_{n,p} = \frac{\omega_{n+p}}{\omega_{2}\omega_{n}\omega_{p-1}}$ and $\omega_m$ is the $m$-dimensional volume of the unit ball $B$ of $\mathbb{R}^m$.
The $L_p$ Busemann-Petty centroid inequality states that    
\begin{equation}\label{bpm}
\vol(\Gamma_pK) \geq \vol(K)\quad \mbox{ or }\quad \vol(M_pK) \geq c^{n/p}_{n,p}\vol(K)^{\frac{n+p}{p}},
\end{equation}
in terms of the moment body $M_pK$.
Equality holds in \eqref{bpm} if and only if $K$ is a $0$-symmetric ellipsoid.

Centroid bodies for $p=1$ can be found for the first time in a work of Blaschke \cite{blaschke1917affine} whereas the respective Busemann-Petty centroid inequality for $p=1$ is due to Petty \cite{petty1961centroid}. The $L_p$ version of centroid bodies above was introduced by Lutwak and Zhang \cite{lutwak1997blaschke}, while (\ref{bpm}) was obtained by Lutwak, Yang and Zhang in \cite{LYZlpaffiso}. For the history of the Busemann-Petty centroid inequality and a comprehensive introduction on centroid and moment bodies we refer to Chapter 10 in \cite{schneider2014convex}. 

The theory of mixed volumes, first developed by Minkowski \cite{MINK1,minkowski1911theorie}, is one of the pillars of the Brunn-Minkowski theory, it provides us with a unified approach to the study of several of the most important parameters in Convex Geometry, such as volume, mean width, surface area, among others. At the same time, it has been fundamental in many other problems ranging from characterization of special families of convex bodies to establish new isoperimetric inequalities, we refer to \cite{schneider2014convex,BONFEN} for a comprehensive introduction to the theory of mixed volumes. There are several extensions of the concept of mixed volume, in this work we will focus mainly in the dual mixed volume and the $L_p$ extension of the mixed volume, concepts belonging to the dual and $L_p$ Brunn-Minkowski theory respectively. Regarding the latter we have the following $L_p$ extension of mixed volume, for some background on this we refer to \cite{LEBMFtheory} and to \cite{LYZoptsobnorms} and the references therein.  

For $r \geq 1$, the $L_r$-mixed volume $V_r(K,L)$ of convex bodies $K$ and $L$ is defined by
\begin{equation*}
V_r(K,  L) = \frac{r}{n}\lim_{\varepsilon \rightarrow 0}\frac{\vol(K +_{r}\varepsilon\cdot_{r}L) - \vol(K)}{\varepsilon},
\end{equation*}
where $K +_{r}\varepsilon \cdot_rL$ is the convex body defined by:
\begin{equation*}
h_{K +_{r}\varepsilon \cdot_r L}(x)^r = h_K(x)^r + \varepsilon h_L(x)^r, \quad \forall x \in \mathbb{R}^n.
\end{equation*}

One of the main aspects of the mixed volume is that it has an integral representation. As in the classical case for the $L_r$ version it is known (see \cite{LEBMFtheory}) that there exists a unique finite positive Borel measure $S_r(K, .)$ on $\mathbb{S}^{n-1}$ such that
\begin{equation}\label{defLpmxvol}
V_r(K, L) = \frac{1}{n}\int_{\mathbb{S}^{n-1}}h_L(u)^rdS_r(K, u),
\end{equation}
for each convex body $L$.

If $1 \leq r < \infty$ and $K, L$ are convex bodies in $\mathbb{R}^n$ containing the origin as interior point, we can find also in \cite{LEBMFtheory} that
\begin{equation}\label{ebvm}
V_r(K, L) \geq \vol(K)^{\frac{n-r}{n}}\vol(L)^{\frac{r}{n}},
\end{equation}	 
with equality if and only if $K$ and $L$ are dilates of each other.  
Combining inequalities \eqref{ebvm}  and \eqref{bpm}, we obtain:
\begin{equation} \label{pp}
V_r(L, M_pK) \geq c^{r/p}_{n,p}\vol(L)^{\frac{n-r}{n}}\vol(K)^{\frac{(n+p)r}{np}}.
\end{equation}
Taking $L = M_pK$ in \eqref{pp}, we recover \eqref{bpm}, hence \eqref{pp} is an equivalent formulation for the $L_p$ Busemann-Petty centroid inequality. This and similar geometric inequalities for mixed volumes involving centroid and projection bodies were already considered in \cite{LutwakMixedPrjIneq}. The main result, Theorem \ref{mainthm} is a functional version of inequality \eqref{pp}, replacing the sets $L,K$ by functions $f,g$.

In order to establish a functional version of \eqref{pp} and considering the integral representation of the geometric $L_r$ mixed volume (\ref{defLpmxvol}), let us recall the following result obtained by Lutwak, Yang and Zhang, where they introduced the concept of surface area measure of a Sobolev function. 

The $L_r$ surface area measure of a function $f: \mathbb{R}^n \rightarrow \mathbb{R}$ with $L_r$ weak derivative is given by:

\begin{lem}[Lemma 4.1 of \cite{LYZoptsobnorms}]\label{lfx}
	Given $1 \leq r < \infty$ and a function $f: \mathbb{R}^n \rightarrow \mathbb{R}$ with $L_r$ weak derivative, there exists a unique finite Borel measure $S_r(f,.)$ on $\mathbb{S}^{n-1}$ such that
	\begin{equation}
		\label{def_Srf}
		\int_{\mathbb{R}^n}\phi(-\nabla f(x))^r dx = \int_{\mathbb{S}^{n-1}}\phi(u)^rdS_r(f,u),
	\end{equation}
for every non-negative continuous function $\phi : \mathbb{R}^n \rightarrow \mathbb{R}$ homogeneous of degree $1$. If $f$ is not equal to a constant function almost everywhere, then the support of $S_r(f,.)$ cannot be contained in any $n-1$ dimensional linear subspace.
\end{lem}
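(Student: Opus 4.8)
\emph{Sketch of proof.}
The plan is to realize the functional $\phi\mapsto\int_{\R^n}\phi(-\nabla f(x))^r\,dx$ as integration against a push-forward measure. Fix a Borel representative of the weak gradient $\nabla f$, put $U:=\{x\in\R^n:\nabla f(x)\neq 0\}$, and let $\nu_f(x):=-\nabla f(x)/|\nabla f(x)|$ for $x\in U$, so that $\nu_f\colon U\to\s^{n-1}$ is Borel measurable. Since $\phi$ is non-negative and homogeneous of degree $1$, for almost every $x$ one has $\phi(-\nabla f(x))^r=|\nabla f(x)|^r\,\phi(\nu_f(x))^r$ on $U$ and $\phi(-\nabla f(x))^r=0$ off $U$. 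I would therefore \emph{define} $S_r(f,\cdot)$ to be the image under $\nu_f$ of the measure $|\nabla f(x)|^r\mathbf{1}_U(x)\,dx$; explicitly,
\[
S_r(f,\omega):=\int_{\nu_f^{-1}(\omega)}|\nabla f(x)|^r\,dx,\qquad \omega\subseteq\s^{n-1}\ \text{Borel}.
\]
This is a finite Borel measure, its total mass being $\int_{\R^n}|\nabla f|^r\,dx<\infty$, finite precisely because $f$ has $L_r$ weak derivative.

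With this definition, identity \eqref{def_Srf} is immediate from the change of variables formula for push-forward measures applied to the non-negative Borel function $\psi=\phi^r$: it yields $\int_{\s^{n-1}}\phi^r\,dS_r(f,\cdot)=\int_U\phi(\nu_f(x))^r|\nabla f(x)|^r\,dx=\int_{\R^n}\phi(-\nabla f(x))^r\,dx$, using the pointwise identity above. For uniqueness, observe that as $\phi$ runs over all non-negative continuous $1$-homogeneous functions on $\R^n$, the restriction $\phi^r|_{\s^{n-1}}$ runs over \emph{all} non-negative continuous functions on $\s^{n-1}$ (given $\psi\ge 0$ continuous on $\s^{n-1}$, take $\phi$ to be the $1$-homogeneous extension of $\psi^{1/r}$). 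Since every continuous function on $\s^{n-1}$ is a difference of two non-negative ones, \eqref{def_Srf} determines $\int_{\s^{n-1}}\psi\,d\mu$ for all $\psi\in C(\s^{n-1})$, and a finite Borel measure on the compact metric space $\s^{n-1}$ is determined by these integrals; hence $S_r(f,\cdot)$ is unique.

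For the non-degeneracy of the support, suppose for contradiction that $\operatorname{supp}S_r(f,\cdot)\subseteq\xi^\perp\cap\s^{n-1}$ for some $\xi\in\s^{n-1}$, and apply \eqref{def_Srf} with the non-negative $1$-homogeneous function $\phi(y)=|\langle y,\xi\rangle|$. The right-hand side vanishes because $\phi\equiv 0$ on $\operatorname{supp}S_r(f,\cdot)$, so $\int_{\R^n}|\langle\nabla f(x),\xi\rangle|^r\,dx=0$, i.e.\ the directional derivative $\partial_\xi f$ vanishes almost everywhere. By the absolute-continuity-on-lines characterization of weakly differentiable functions, $f$ then agrees a.e.\ with a function constant along every line of direction $\xi$; writing $\pi$ for the orthogonal projection onto $\xi^\perp$, this means $f=g\circ\pi$ a.e.\ for some $g$ on $\xi^\perp$, and hence $\int_{\R^n}|\nabla f|^r\,dx=\bigl(\int_{\xi^\perp}|\nabla g|^r\bigr)\cdot\int_{\R}dt$. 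Finiteness of the left-hand side forces $\nabla g=0$ a.e., so $g$, and therefore $f$, is a.e.\ constant — contradicting the hypothesis. Thus $\operatorname{supp}S_r(f,\cdot)$ cannot lie in any $(n-1)$-dimensional linear subspace.

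The construction itself is short; the delicate part is the measure-theoretic bookkeeping around it — verifying that $\nu_f$ is Borel so the push-forward is legitimate, and that the null set $\{\nabla f=0\}$ contributes nothing to either side — together with, for the final assertion, the use of the ACL property \emph{and} the fact that $\nabla f\in L_r(\R^n)$ globally (not merely locally), which is exactly what upgrades ``$\partial_\xi f=0$ a.e.'' to ``$f$ is constant''. Everything else reduces to the push-forward change of variables and the Riesz representation theorem.
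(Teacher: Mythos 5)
The paper itself gives no proof of this lemma --- it is imported verbatim as Lemma 4.1 of \cite{LYZoptsobnorms} --- so there is no in-paper argument to compare against; your proof is correct and is essentially the standard construction from that source. Namely, $S_r(f,\cdot)$ is the push-forward of $|\nabla f(x)|^r\,dx$ under $x\mapsto -\nabla f(x)/|\nabla f(x)|$ (equivalently, the Riesz representation of the positive functional $\psi\mapsto\int_{\R^n}\psi(\nu_f)|\nabla f|^r\,dx$ on $C(\s^{n-1})$), uniqueness holds because $1$-homogeneous extensions of $\psi^{1/r}$ realize every non-negative element of $C(\s^{n-1})$, and the support statement follows, as you argue, from $\langle\nabla f,\xi\rangle=0$ a.e.\ together with the global (not merely local) $L_r$-integrability of $\nabla f$, which forces $f$ to be a.e.\ constant.
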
 
Conversely, for a convex body $L$ the function $f_L(x) = F(\|x\|_L)$ satisfies $S_r(f,.) = S_r(L,.)$ if $F$ is any function $F:\R_+ \to \R_+$ satisfying 
\[\int_0^\infty t^{n-1}F'(t)^r dt = 1\] (see \cite{LYZoptsobnorms}). 
By the Sobolev inequality we have
\[\int_{\R^n} f_L(x)^{\frac{nr}{n-r}} dx \leq c_s^{\frac{nr}{n-r}} (n \omega_n)^{\frac{n}{n-r}} \frac{\vol(L)}{\omega_n}\]
where $c_s$ is the sharp constant in the Sobolev inequality on $\R^n$, and there is equality when $F(t) = a F_r(t)$ with $a,b > 0$, where
\[F_r(t) = (1+t^{\frac r{r-1}} )^{1-\frac rn}.\]
The function $F(||x||_2)$ is an extremal function of the euclidean $L_r$ Sobolev inequality on $\R^n$.

In view of identity (\ref{def_Srf}),
for any $f$ and $L$ such that $S_r(f,.) = S_r(L,.)$, we have
\[V_r(L, K) = \frac 1n \int_{\mathbb{R}^n} h_K(-\nabla f(x))^r dx.\]
This motivates the following definition.
\begin{defin}
	Given $1 \leq r < \infty$ and a function $f: \mathbb{R}^n \rightarrow \mathbb{R}$ with $L_r$ weak derivative, we define
	\begin{equation*}
		V_r(f, K) = \frac 1n \int_{\mathbb{R}^n}h_K(-\nabla f(x))^r dx
	\end{equation*}
\end{defin}

The $L_p$ Sobolev inequality for general norms was proved in \cite{Cmasstransp} and \cite{alvino1997convex} and can be stated as a mixed volume inequality for functions as follows:
\begin{teo}\label{t1vmv} If $f$ is a $C^1$ function with compact support in $\mathbb{R}^n$ and $K$ is an origin-symmetric convex body, then for $1 < r < n$ and $q = \frac{nr}{n-r}$
	\begin{equation}
		\label{t1vmv_ineq}
		V_r(f, K) \geq c_1^r \|f\|^r_q \vol(K)^{\frac{r}{n}},
	\end{equation}
where $c_1$ is the optimal constant and equality holds in \eqref{t1vmv_ineq} if and only if $f(x) = a F_r(b \|x\|_K)$ for some $a,b>0$.
	Taking $f(x) = F_r(\|x\|_L)$ we recover inequality \eqref{ebvm}.
\end{teo}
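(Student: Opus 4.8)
The plan is to recognize inequality \eqref{t1vmv_ineq} as the sharp anisotropic (general‑norm) Sobolev inequality in the guise supplied by the identity $V_r(f,K)=\frac1n\int_{\R^n}h_K(-\nabla f)^r\,dx$, and to prove it by the mass‑transportation method of \cite{Cmasstransp} (the convex/Schwarz–symmetrization argument of \cite{alvino1997convex} gives an alternative route). Since $K$ is origin‑symmetric, $h_K$ is a norm whose dual norm is the gauge $\|\cdot\|_K$, and $h_K(-\nabla f)=h_K(\nabla f)$. First I would reduce to $f\ge 0$ (replacing $f$ by $|f|$ leaves $h_K(\nabla f)$ unchanged a.e.) and normalize $\|f\|_q=1$. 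Let $\rho(x)=F_r(\|x\|_K)$ be the radial extremal profile, so that the conjectured extremizers are $aF_r(b\|x\|_K)$; set $G=\rho^{q}/\|\rho\|_q^{q}$ and $F=f^{q}$, both probability densities, and let $T=\nabla\varphi$ be the Brenier map with $T_\#(F\,dx)=G\,dy$, so that the Monge--Ampère equation $F(x)=G(T(x))\det DT(x)$ holds a.e.

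Next I would run the transport chain. Writing $J:=\int_{\R^n}G^{1-1/n}\,dy$ and changing variables $y=T(x)$,
\[
J=\int_{\R^n}F(x)^{1-1/n}\big(\det DT(x)\big)^{1/n}\,dx\le \tfrac1n\int_{\R^n}F^{1-1/n}\,\operatorname{div}T\,dx,
\]
by the arithmetic--geometric mean inequality applied to the nonnegative eigenvalues of $DT=D^2\varphi$. Integrating by parts and using $\nabla F=q\,f^{q-1}\nabla f$ leads, via the anisotropic Young inequality $\langle-\nabla f,T\rangle\le h_K(\nabla f)\,\|T\|_K$, to a bound of $J$ by $\int f^{\,n(r-1)/(n-r)}\,h_K(\nabla f)\,\|T\|_K\,dx$ up to an explicit constant. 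Hölder with exponents $r$ and $r'=r/(r-1)$ splits this as $\big(\int h_K(\nabla f)^r\big)^{1/r}\big(\int f^{q}\|T\|_K^{r'}\big)^{1/r'}$, the powers of $f$ matching because $\frac{n(r-1)}{n-r}\cdot r'=q$. The decisive point is that, since $T$ pushes $f^q\,dx$ to $\rho^q\,dy/\|\rho\|_q^q$, the last factor is an explicit moment of the extremal, $\int f^q\|T\|_K^{r'}\,dx=\|\rho\|_q^{-q}\int\rho^q\|y\|_K^{r'}\,dy$, independent of $f$. Collecting exponents (which reduce to $1$ by $1/r-1/n=1/q$) yields $\int h_K(\nabla f)^r\,dx\ge n\,c_1^r\,\vol(K)^{r/n}\|f\|_q^r$, with $c_1$ determined explicitly by $J$ and the above moment; the factor $\vol(K)^{r/n}$ emerges from the scaling of $J$ and the moment under the linear map sending $B$ to $K$.

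The main obstacle is the regularity of the transport: $\varphi$ is only convex, so $D^2\varphi$ exists merely a.e.\ (Aleksandrov) and $\operatorname{div}T$ may carry a singular part. Following \cite{Cmasstransp} I would interpret the divergence distributionally and use $(\det D^2\varphi)^{1/n}\le\frac1n\Delta_a\varphi\le\frac1n\Delta\varphi$ (with $\Delta_a\varphi$ the trace of the absolutely continuous part), so that the integration by parts against the smooth, compactly supported weight $F^{1-1/n}$ holds with the inequality in the correct direction, the change of variables being justified by the a.e.\ Monge--Ampère identity. For equality, tracing the argument backwards: the arithmetic--geometric mean step forces $DT=\lambda\,\mathrm{Id}$ a.e., hence $T$ is a homothety; equality in Young forces $-\nabla f$ to realize $h_K$ at $T$; and equality in Hölder forces $h_K(\nabla f)^r\propto f^q\|T\|_K^{r'}$. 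Together these ODE‑type constraints integrate to $f(x)=aF_r(b\|x\|_K)$, matching the stated extremizers.

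Finally, the last assertion follows directly from the material already set up: taking $f(x)=F_r(\|x\|_L)$ (normalized so that $\int_0^\infty t^{n-1}F_r'(t)^r\,dt=1$) gives $S_r(f,\cdot)=S_r(L,\cdot)$, whence $V_r(f,K)=V_r(L,K)$ by \eqref{def_Srf}, while the Sobolev equality case relates $\|f\|_q^r$ to $\vol(L)^{(n-r)/n}$. Substituting into \eqref{t1vmv_ineq} and checking that the constants combine to $1$ returns precisely $V_r(L,K)\ge\vol(L)^{(n-r)/n}\vol(K)^{r/n}$, i.e.\ inequality \eqref{ebvm}.
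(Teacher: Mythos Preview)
Your argument is correct: it is precisely the mass--transportation proof of the sharp anisotropic Sobolev inequality from \cite{Cmasstransp}, and you handle the regularity of the Brenier map and the equality analysis appropriately. However, the paper deliberately takes a different route, explicitly advertising ``an alternative, simpler and elementary proof \ldots\ using the tools developed in \cite{LYZlpaffsob}.'' The paper's proof proceeds by the co-area formula and the Lutwak--Yang--Zhang construction \eqref{vfc}, which associates to each level $t$ an origin-symmetric convex body $K_t$ with $V_r(K_t,Q)=V_r(f,t,Q)$; it then applies the purely geometric $L_r$ mixed-volume inequality \eqref{ebvm} levelwise and finishes with Lemma~\ref{ivpv} to bound $\int_0^\infty \vol(K_t)^{(n-r)/n}\,dt$ by $\|f\|_q^r$. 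What the paper's approach buys is structural: it reduces the functional statement directly to the geometric inequality \eqref{ebvm} it is meant to generalize, stays inside the convex-geometry toolkit already set up in the paper, and avoids Brenier maps, Monge--Amp\`ere, and the attendant regularity issues entirely. What your approach buys is self-containedness on the analytic side---no need for the LYZ bodies $K_t$ or for Lemma~\ref{ivpv}---at the cost of importing the optimal-transport machinery that the paper is specifically trying to circumvent.
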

	Theorem \ref{t1vmv} was originally proved using an innovative approach based on optimal transportation of mass in \cite{Cmasstransp} and in \cite{alvino1997convex} using Convex Symmetrization.
	
	In Section \ref{sec:MainRes} we give an alternative, simpler and elementary proof of this inequality using the tools developed in \cite{LYZlpaffsob}. Some of the tools we are using here, specially those contained in \cite{LYZoptsobnorms}, have been used in the study of Sobolev type inequalities. Their approach is often based on a functional extension of the so-called $\operatorname{LYZ}$ body and other known geometric inequalities for projection and polar projection bodies (see Subsection 10.15 in \cite{schneider2014convex} and references therein for more on this). 

Let us go back to the definition of the moment body \eqref{def_GpK}, it has been noticed that $h_{M_pK}$ is a convex function regardless of the set $K$ (see e.g. Chapter 5 in \cite{brazitikos2014geometry}).
This observation allows us to make the following definition:
\begin{defin} If $g$ is a non-negative measurable function with compact support, we define the convex body $M_pg$ by	
	\begin{equation*}
		h_{M_pg}(\xi)^p = \int_{\mathbb{R}^n}g(x)|\langle x, \xi \rangle|^pdx.
	\end{equation*}
\end{defin}
The left-hand side of \eqref{mainthm_ineq} has then a geometric meaning:
	\[
	V_r(f, M_pg) = \frac{1}{n} \int_{\mathbb{R}^n}\left(\int_{\mathbb{R}^n}g(y)|\langle \nabla f(x), y \rangle|^pdy\right)^{r/p}dx.
	\]

If $K$ is a convex body and $g(x) = G(\|x\|_K)$ for any non-negative continous function $G:\R_+ \to \R$ with compact support, it is not hard to verify using polar coordinates that
\begin{equation*}\label{mpgmpk}
	M_p g = \left((n+p) \int_0^\infty t^{n+p-1} G(t) dt \right)^{1/p} M_p K.
\end{equation*}

Our main result (Theorem \ref{mainthm}) is a consequence of Theorem \ref{t1vmv}, and Theorem \ref{taux} below:

\begin{teo}
	\label{taux} 
	If $g$ is a non-negative function with compact support in $\mathbb{R}^n$, then, for each $\lambda \in \left(\frac{n}{n+p}, 1\right) \cup (1, \infty)$, we have that
	\begin{equation}
		\label{taux_ineq}
		\vol(M_pg)^{\frac{p}{n}} \geq c_{n,p}a_{n, p, \lambda}||g||^\frac{(n+p)(\lambda - 1) + p}{(\lambda - 1)n}_1||g||^{-\frac{\lambda p}{(\lambda-1)n}}_{\lambda},
	\end{equation}
where $a_{n, p, \lambda}$ is given by the Lemma \eqref{lvnp}. 

	Let $G_{p,\lambda}:\R_+ \to \R$ be defined by
	\begin{equation*}
		G_{p,\lambda}(t) = \left\{
			\begin{array}{cc}
				(1+t^p)^{\frac 1{\lambda-1}} & \hbox{ if } \lambda < 1\\
				(1-t^p)_+^{\frac 1{\lambda-1}} & \hbox{ if } \lambda > 1,
			\end{array}
			\right.
	\end{equation*}
	then taking $g(x) = G_{p,\lambda}(\|x\|_K)$ in \eqref{taux_ineq} we recover \eqref{bpm}.

	Equality holds in \eqref{taux_ineq} if and only if $g(x) = a G_{p, \lambda}(|A.x|_2)$ for any $a > 0$ and $A \in \operatorname{Gl}_n(\R^n)$.
\end{teo}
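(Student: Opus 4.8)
The plan is to derive Theorem~\ref{taux} from the $L_p$ Busemann--Petty centroid inequality~\eqref{bpm} by two reductions: first to radially symmetric $g$, then to a one-dimensional inequality in the radial profile. The key observation for the first reduction is that the right-hand side of~\eqref{taux_ineq} depends on $g$ only through $\|g\|_1$ and $\|g\|_\lambda$, which are unchanged if $g$ is replaced by its symmetric decreasing rearrangement $g^\star$; hence it is enough to show $\vol(M_pg)^{p/n}\ge\vol(M_pg^\star)^{p/n}$. I would obtain this from the layer-cake decomposition $g=\int_0^\infty\mathbf{1}_{\{g>s\}}\,ds$ together with Tonelli's theorem, which gives
\[
h_{M_pg}(\xi)^p=\int_{\mathbb{R}^n}g(x)|\langle x,\xi\rangle|^p\,dx=\int_0^\infty h_{M_p\mathbf{1}_{\{g>s\}}}(\xi)^p\,ds,
\]
displaying $M_pg$ as a continuous $L_p$-combination of the (convex) moment bodies $M_p\mathbf{1}_{\{g>s\}}$ of the superlevel sets of $g$. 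Applying the integral form of the $L_p$ Brunn--Minkowski inequality gives $\vol(M_pg)^{p/n}\ge\int_0^\infty\vol(M_p\mathbf{1}_{\{g>s\}})^{p/n}\,ds$, and the $L_p$ Busemann--Petty centroid inequality for each bounded measurable set $\{g>s\}$ gives $\vol(M_p\mathbf{1}_{\{g>s\}})^{p/n}\ge\vol(M_pB_s)^{p/n}$, where $B_s$ is the centered ball of volume $\vol(\{g>s\})=\vol(\{g^\star>s\})$. Since each $M_pB_s$ is a centered ball, a direct computation with polar coordinates gives $\int_0^\infty\vol(M_pB_s)^{p/n}\,ds=\vol(M_pg^\star)^{p/n}$, which completes the first reduction.

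Writing $g^\star(x)=G(|x|_2)$ with $G\ge0$ non-increasing, polar coordinates give $\|g^\star\|_1=n\omega_n\int_0^\infty t^{n-1}G(t)\,dt$ and $\|g^\star\|_\lambda^\lambda=n\omega_n\int_0^\infty t^{n-1}G(t)^\lambda\,dt$, while the displayed identity $M_pg^\star=\big((n+p)\int_0^\infty t^{n+p-1}G(t)\,dt\big)^{1/p}M_pB$ together with $\vol(M_pB)^{p/n}=c_{n,p}\,\omega_n^{(n+p)/n}$ (equality in~\eqref{bpm} for the ball) gives $\vol(M_pg^\star)^{p/n}=(n+p)\,c_{n,p}\,\omega_n^{(n+p)/n}\int_0^\infty t^{n+p-1}G(t)\,dt$. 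Substituting these into~\eqref{taux_ineq} and cancelling common factors---here one uses $\tfrac{(n+p)(\lambda-1)+p}{(\lambda-1)n}-\tfrac{p}{(\lambda-1)n}=\tfrac{n+p}{n}$, which makes the powers of $\omega_n$ and $c_{n,p}$ agree on both sides---turns~\eqref{taux_ineq} for $g^\star$ into the one-dimensional inequality of Lemma~\ref{lvnp}, so the argument is complete once that lemma is in hand. I would prove Lemma~\ref{lvnp} variationally: after using the scalings $G\mapsto cG$ and $G\mapsto G(\cdot/\mu)$ to normalize two of the three integrals, the Euler--Lagrange equation of the remaining functional, divided by $t^{n-1}$, reads $t^p=A+BG^{\lambda-1}$, so $G=(A'+B't^p)^{1/(\lambda-1)}$; fixing the admissible sign of $B'$ in each of the ranges $\lambda<1$ and $\lambda>1$ identifies the critical profile as $a\,G_{p,\lambda}$ and determines $a_{n,p,\lambda}$, and a convexity argument after the normalizations shows this critical point is the global extremum.

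For the equality cases one follows the chain backwards. Equality in~\eqref{bpm} for (almost every) superlevel set forces each $\{g>s\}$ to be an ellipsoid centered at the origin; equality in the $L_p$ Brunn--Minkowski step forces the bodies $M_p\mathbf{1}_{\{g>s\}}$---which are scalar multiples of those ellipsoids---to be pairwise homothetic, so, since the superlevel sets are nested, $\{g>s\}=\rho(s)\mathcal{E}$ for a single centered ellipsoid $\mathcal{E}$ and a decreasing $\rho$, i.e.\ $g(x)=G(|Ax|_2)$ for some invertible $A$; finally equality in Lemma~\ref{lvnp} forces $G=a\,G_{p,\lambda}$, and conversely every such $g$ produces equality throughout. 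The assertion that $g(x)=G_{p,\lambda}(\|x\|_K)$ recovers~\eqref{bpm} is then immediate: plugging this $g$ into~\eqref{taux_ineq}, using the displayed $M_pg$--$M_pK$ relation and the equality case of Lemma~\ref{lvnp}, all profile integrals cancel and~\eqref{taux_ineq} collapses to $\vol(M_pK)^{p/n}\ge c_{n,p}\vol(K)^{(n+p)/n}$.

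The step I expect to require the most care is the first reduction: one must justify the integral form of the $L_p$ Brunn--Minkowski inequality together with its equality case for continuous $L_p$-combinations, and one must invoke the $L_p$ Busemann--Petty centroid inequality---with equality cases---for arbitrary bounded measurable superlevel sets rather than for convex bodies. A secondary point, internal to Lemma~\ref{lvnp}, is verifying that $G_{p,\lambda}$ is a genuine global extremum and treating the two parameter ranges correctly; note that for $\lambda\in(\tfrac{n}{n+p},1)$ the profile $G_{p,\lambda}$ is not compactly supported, so within the stated class of compactly supported functions equality in~\eqref{taux_ineq} is attained only in a limiting sense.
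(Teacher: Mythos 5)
Your proposal follows essentially the same route as the paper: decompose $h_{M_pg}^p$ over the superlevel sets of $g$, pass the volume through the integral via an $L_p$ Brunn--Minkowski-type inequality, apply the Busemann--Petty centroid inequality for non-convex level sets (the paper's Lemma \ref{bpdc}), and finish with the one-dimensional Lemma \ref{lvnp}; your detour through the symmetric decreasing rearrangement $g^\star$ is only a repackaging of the last two steps, since all quantities involved are rearrangement invariant. The one step you explicitly leave open --- the ``integral form of the $L_p$ Brunn--Minkowski inequality'' $\vol(M_pg)^{p/n}\ge\int_0^\infty\vol(M_pN_{g,t})^{p/n}\,dt$ --- is exactly where the paper's argument does its work: it follows in one line from the self-pairing $\vol(M_pg)=V_p(M_pg,M_pg)=\int_0^\infty V_p(M_pg,M_pN_{g,t})\,dt$ together with the $L_p$ mixed-volume inequality \eqref{ebvm}, so you should supply that identity rather than cite the integral inequality as known.
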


	Even though Theorem \ref{taux} contains the geometric core of the main Theorem \ref{mainthm}, the term $\vol(M_pg)$ cannot be expressed in terms of $g$ in an elementary way, as $V_r(f, M_pg)$ does. This is the reason why we need to combine it with Theorem \ref{t1vmv} to obtain a functional inequality.
	
	Let us note that Theorem \ref{mainthm} cannot be regarded as a functional mixed volume inequality in full generality since it can only be applied to a function $f$ and the centroid/moment body of another function $g$. We refer the interested reader to review the works of Milman and Rotem \cite{milman2013mixed,milman2013alpha} where they have defined a functional extension of mixed volumes and have extended some of their main properties to a functional setting. 
	
	We should finally also mention other related extension of the Busemann-Petty centroid inequality obtained by Paouris and Pivovarov in \cite{paouris2017randomized}  where the authors obtained randomized versions of this and other important isoperimetric inequalities. 

	The rest of the paper is organized as follows: In Section \ref{Sec:Prelim} we shall prove some preliminary results, including an extension of the $L_p$ Busemann-Petty centroid inequality, to compact domains. Then in Section \ref{sec:MainRes} we prove Theorems \ref{t1vmv} and \ref{taux}.

	We hope this work shed some more light into the deep connection between isoperimetric and functional inequalities.

\section{Preliminary results}\label{Sec:Prelim}

In order to prove our main result, Theorem \ref{mainthm}, we consider two cases: $r=1$ and $1 < r < n$. For $r=1$, inequality \eqref{ebvm} holds for more general sets. As in \cite{Zaffsob}, a compact domain is the closure of a bounded open set.

\begin{lem}[Lemma 3.2 of \cite{Zaffsob}]\label{bmvm} If $M$ is a compact domain with piecewise $C^1$ boundary and $K$ a convex body in $\mathbb{R}^n$, then,
	\begin{equation*}
	V(M, K)^n \geq \vol(M)^{n-1}\vol(K),
	\end{equation*}
with equality if and only if M and K are homothetic.
\end{lem}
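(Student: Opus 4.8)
The quantity $V(M,K)$ appearing here is the anisotropic perimeter of $M$ relative to $K$, namely $V(M,K)=\tfrac1n\int_{\partial M}h_K(\nu_M(x))\,\mathrm d\mathcal{H}^{n-1}(x)$, where $\nu_M$ is the outer unit normal of $M$ (defined $\mathcal{H}^{n-1}$-a.e.\ on $\partial M$ since the boundary is piecewise $C^1$); equivalently $V(M,K)=\tfrac1n\int_{\mathbb{S}^{n-1}}h_K\,\mathrm dS(M,\cdot)$, where $S(M,\cdot)$ is the push-forward of the surface measure $\mathcal{H}^{n-1}$ on $\partial M$ under the Gauss map. For $K=B$ one has $V(M,B)=\tfrac1n\operatorname{Per}(M)$, so the asserted inequality is a form of the anisotropic (Wulff) isoperimetric inequality. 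The plan is to obtain it by differentiating volume along Minkowski sums and invoking the Brunn--Minkowski inequality, which holds for arbitrary compact sets.

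\textbf{Step 1 (one-sided first variation).} I would first show that for $\varepsilon>0$ small,
\[
\vol(M+\varepsilon K)\ \le\ \vol(M)+n\varepsilon\,V(M,K)+O(\varepsilon^2),
\]
which is the easy direction, since overlaps of the swept collar can only decrease volume. Indeed $(M+\varepsilon K)\setminus M$ lies inside the tube $\{\,y\notin M:\operatorname{dist}_K(y,M)\le\varepsilon\,\}$ of ``$K$-width'' $\varepsilon$ about $\partial M$ (here $\operatorname{dist}_K(y,M)=\inf\{t\ge0:y\in M+tK\}$), whose volume is $\varepsilon\int_{\partial M}h_K(\nu_M)\,\mathrm d\mathcal{H}^{n-1}+O(\varepsilon^2)$: one parametrizes each smooth face of $\partial M$ and uses Fubini, or applies the coarea formula to the Lipschitz map $y\mapsto\operatorname{dist}_K(y,M)$. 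The piecewise $C^1$ hypothesis enters precisely here, to confine the contributions of edges and corners to $O(\varepsilon^2)$; this is the most technical point of the argument.

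\textbf{Step 2 (Brunn--Minkowski and conclusion).} Applying the Brunn--Minkowski inequality to the compact sets $M$ and $\varepsilon K$ gives $\vol(M+\varepsilon K)^{1/n}\ge\vol(M)^{1/n}+\varepsilon\,\vol(K)^{1/n}$, hence
\[
\vol(M+\varepsilon K)\ \ge\ \vol(M)+n\varepsilon\,\vol(M)^{\frac{n-1}{n}}\vol(K)^{\frac1n}+O(\varepsilon^2).
\]
Combining with Step 1, dividing by $n\varepsilon$ and letting $\varepsilon\to0^+$ yields $V(M,K)\ge\vol(M)^{\frac{n-1}{n}}\vol(K)^{\frac1n}$, i.e.\ $V(M,K)^n\ge\vol(M)^{n-1}\vol(K)$.

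\textbf{Step 3 (equality).} For the equality case I would introduce the convex body $C$ with $S(C,\cdot)=S(M,\cdot)$, furnished by Minkowski's existence theorem; its hypotheses follow from the divergence theorem, which gives $\int_{\partial M}\nu_M\,\mathrm d\mathcal{H}^{n-1}=0$ and also shows $S(M,\cdot)$ is not concentrated on a great subsphere (testing the vector field $x\mapsto\langle u,x\rangle u$ gives $|u|^2\vol(M)>0$ for any $u\neq0$). Then $V(M,L)=V(C,L)$ for every convex body $L$, and taking $L=C$ in Step 2 gives $\vol(C)=V(M,C)\ge\vol(M)^{\frac{n-1}{n}}\vol(C)^{\frac1n}$, whence $\vol(C)\ge\vol(M)$. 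If now $V(M,K)^n=\vol(M)^{n-1}\vol(K)$, then
\[
\vol(M)^{n-1}\vol(K)=V(C,K)^n\ \ge\ \vol(C)^{n-1}\vol(K)\ \ge\ \vol(M)^{n-1}\vol(K)
\]
by the classical Minkowski inequality for the convex bodies $C$ and $K$; so all inequalities are equalities, forcing $C$ to be homothetic to $K$ and $\vol(C)=\vol(M)$. It then remains to see that $M$ is itself convex --- for then, having the same surface area measure as $C$, it is a translate of $C$ and hence a homothet of $K$. This last implication is exactly the equality case of the anisotropic isoperimetric inequality (uniqueness of the Wulff shape among compact domains with a prescribed surface area measure), and it is the genuine obstacle of the proof: I would extract it from the equality conditions in Brunn--Minkowski applied along the family $M+\varepsilon K$ together with the classical uniqueness in the Minkowski problem, or simply invoke it, as is done in the classical theory.
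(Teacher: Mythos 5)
First, note that the paper itself contains no proof of this statement: it is imported verbatim as Lemma 3.2 of \cite{Zaffsob}, and even the definition of $V(M,K)$ for a non-convex compact domain $M$ is left to that reference. Your reconstruction supplies the correct definition (the anisotropic perimeter $\frac1n\int_{\partial M}h_K(\nu_M)\,\mathrm d\mathcal{H}^{n-1}$, equivalently $\frac1n\int h_K\,\mathrm dS(M,\cdot)$ with $S(M,\cdot)$ the push-forward of the boundary surface measure under the Gauss map), and your Steps 1--2 are the standard and correct derivation of the inequality: a one-sided first-variation bound on $\vol(M+\varepsilon K)$ combined with Brunn--Minkowski for the compact sets $M$ and $\varepsilon K$. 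Two minor points there: for a merely piecewise $C^1$ boundary you should claim only $o(\varepsilon)$, not $O(\varepsilon^2)$, in Step 1 (there is no curvature bound controlling the second-order term of the tube expansion), but $o(\varepsilon)$ is all that Step 2 uses; and one should first translate $K$ so that $0\in K$, making $h_K\ge 0$, which is harmless since both sides of the inequality are invariant under translations of $K$.

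The genuine gap is the equality case. Your reduction through the convexification $C$ (Minkowski's existence theorem, $V(M,\cdot)=V(C,\cdot)$, $\vol(C)\ge\vol(M)$, and the classical Minkowski inequality forcing $C$ homothetic to $K$ with $\vol(C)=\vol(M)$) is correct as far as it goes, but the remaining claim --- that $\vol(C)=\vol(M)$ forces $M$ to be convex --- is precisely the uniqueness of the Wulff shape, i.e.\ the equality case of the very inequality being proved, so invoking it is circular unless you cite an independent proof (Taylor, Fonseca--M\"uller). Your alternative suggestion, extracting convexity from the equality conditions of Brunn--Minkowski along the family $M+\varepsilon K$, does not work as stated: after normalizing so that $C=K$ and $\vol(K)=\vol(M)$, the Brunn--Minkowski lower bound $\vol(M+\varepsilon K)\ge\vol(M)(1+\varepsilon)^n$ and the Step 1 upper bound $\vol(M)(1+n\varepsilon)+o(\varepsilon)$ agree to first order in $\varepsilon$ for \emph{every} extremal $M$, so Brunn--Minkowski is only asymptotically saturated and its equality case is never triggered at any fixed $\varepsilon$. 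The equality case therefore requires a genuinely separate rigidity argument or an external citation --- which, to be fair, is all the paper itself provides for the entire lemma.
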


In the same spirit, the next lemma shows that the $L_p$-Busemann-Petty Centroid inequality remains valid for a compact domain:

\begin{lem}\label{bpdc}
 If $M$ is a compact domain, then 
 \begin{equation}
	 \label{bpdc_ineq}
 	\vol(\Gamma_pM) \geq \vol(M).
 \end{equation}
Equality holds in \eqref{bpdc_ineq} if and only if $M$ is a $0$-symmetric ellipsoid.
\end{lem}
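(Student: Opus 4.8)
The plan is to derive Lemma \ref{bpdc} from the convex-body inequality \eqref{bpm} by Steiner symmetrization, mirroring the proof of \eqref{bpm} itself (Campi--Gronchi, Lutwak--Yang--Zhang). The key observation is that $\vol(M)$ and the moment body $M_pM$ — hence $\Gamma_pM = (c_{n,p}\vol(M))^{-1/p}M_pM$, which is a genuine convex body because $h_{M_pM}$ is convex for any $M$ — depend on $M$ only through the Lebesgue measure of $M$; consequently no convexity of $M$ enters, and the symmetrization machinery behind \eqref{bpm} is, in principle, available for arbitrary compact domains. (By contrast with the $r=1$ situation of Lemma \ref{bmvm}, which rests on the anisotropic isoperimetric inequality, the quantity here is of second order and genuinely calls for a symmetrization argument rather than a Wulff-shape one.)

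After translating so that $M$ has centroid at the origin (which, as in the convex case, only makes \eqref{bpdc_ineq} harder), I would fix a hyperplane $H$ through the origin and pass to the Steiner symmetral $S_HM$, for which $\vol(S_HM)=\vol(M)$. The heart of the matter is the monotonicity
\[
\vol(\Gamma_pS_HM)\le\vol(\Gamma_pM),
\]
which is precisely the symmetrization step in the proof of \eqref{bpm}: for a volume-preserving shadow system $M_t$ interpolating between $M$, its reflection in $H$, and $S_HM$, and for each fixed $u\in\s^{n-1}$, the function $t\mapsto\int_{M_t}|\langle u,x\rangle|^p\,dx$ is convex — each integrand $|\langle u,\cdot\rangle+t(\cdot)|^p$ is convex in $t$, a computation that never uses convexity of $M$ — and the Campi--Gronchi argument then delivers the displayed inequality. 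Since symmetrization through a hyperplane containing the origin does not increase the circumradius about the origin, the iterated symmetrals $M^{(k)}$ stay in a fixed ball; choosing the directions appropriately one has $M^{(k)}\to B$ in measure, where $B$ is the centered ball of volume $\vol(M)$, so $\int_{M^{(k)}}|\langle u,x\rangle|^p\,dx\to\int_B|\langle u,x\rangle|^p\,dx$ by dominated convergence and hence $\vol(\Gamma_pM^{(k)})\to\vol(\Gamma_pB)$. As $\Gamma_pB=B$, chaining the inequalities yields $\vol(\Gamma_pM)\ge\vol(\Gamma_pB)=\vol(B)=\vol(M)$.

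For the equality case, if $\vol(\Gamma_pM)=\vol(M)$ then equality must hold at every symmetrization step; unwinding the equality condition in the shadow-system convexity forces $S_HM=M$ up to a null set for every $H$ through the origin, so $M$ is symmetric under Steiner symmetrization in all directions and therefore agrees a.e. with a centered ball, and using the $GL_n$-equivariance $\Gamma_p(AM)=A\,\Gamma_pM$ (so that \eqref{bpdc_ineq} is affine invariant) one concludes that equality holds if and only if $M$ is a $0$-symmetric ellipsoid. The main obstacle is the monotonicity $\vol(\Gamma_pS_HM)\le\vol(\Gamma_pM)$ for non-convex $M$: one must isolate the purely measure-theoretic content of the proof of \eqref{bpm} — which in the literature is phrased for convex bodies via the shadow-system formalism and its convex-hull operations — and check that it carries through when $M$ is only a compact domain. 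The remaining points (selecting a symmetrization sequence converging to the ball for a merely compact set, the uniform boundedness needed to pass $\vol(\Gamma_p\cdot)$ to the limit, and propagating the equality condition through the iteration together with the characterization of symmetrization-stable sets) are standard but require care.
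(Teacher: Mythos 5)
Your route --- redoing the Steiner-symmetrization proof of \eqref{bpm} directly for compact domains --- is genuinely different from the paper's, and it contains real gaps at exactly the points you flag as ``to be checked.'' The central one is the monotonicity $\vol(\Gamma_pS_HM)\le\vol(\Gamma_pM)$ for non-convex $M$. The justification you offer (convexity of $t\mapsto\int_{M_t}|\langle u,x\rangle+t(\cdot)|^p\,dx$ for each fixed $u$) is not sufficient: pointwise convexity of $h_{M_pM_t}(u)^p$ in $t$ only places $M_pM_{1/2}$ inside an $L_p$-Firey mean of $M_pM_0$ and $M_pM_1$, and the $L_p$ Brunn--Minkowski inequality bounds the volume of that mean from \emph{below}, not above. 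What Campi--Gronchi actually use is that $M_pK_t$ is itself a shadow system, so that the Rogers--Shephard convexity of volume along shadow systems applies; that stronger structural fact would have to be re-established here. Moreover, for a non-convex $M$ the fibers $M\cap(x'+\R v)$ are not intervals, so the volume-preserving family $M_t$ interpolating between $M$, its reflection, and $S_HM$ does not exist by rigid chord translation alone --- you need an additional one-dimensional rearrangement on each fiber before the shadow-system step even makes sense. Finally, your equality analysis is incorrect as stated: equality in a single Steiner step does not force $S_HM=M$ (if it did, only centered balls could be extremal, contradicting the fact that all $0$-symmetric ellipsoids attain equality); the equality condition along a shadow system is that the bodies are translates of one another, and extracting the ellipsoid characterization from an infinite iteration of symmetrizations requires a separate, delicate argument.

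For comparison, the paper avoids all of this with a single radial rearrangement: for each direction $\xi\in\s^{n-1}$ it replaces the set $L_\xi=\{t\ge 0: t\xi\in M\}$ by an initial segment of the same weighted measure (via the bathtub principle applied to $\int s^{p/n}\,ds$ after the substitution $s=t^n/n$), producing a star body $SM$ with $\vol(SM)=\vol(M)$ and $h_{M_pSM}\le h_{M_pM}$ pointwise; the claim then follows from the known star-body case of \eqref{bpm} applied to $SM$, and the equality case reduces to showing $M$ is already a star body. If you want to salvage your approach, the honest statement is that it amounts to reproving the $L_p$ Busemann--Petty centroid inequality for bounded measurable sets from scratch, whereas the problem only asks you to reduce the compact-domain case to the already-known star-body case.
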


\begin{proof} For a compact domain $M$ and $\xi \in \mathbb{S}^{n-1}$, we define the set
\begin{equation*}
L_{\xi} = \{t \in [0, \infty): t\xi \in M \}.
\end{equation*}

Consider $\delta (t) = \frac{t^n}{n}$, for $t \geq 0$, and the star set $SM$ defined by its radial function 
\begin{equation*}
\rho_{SM}(\xi) = \delta^{-1}(\mu(\delta(L_{\xi}))),
\end{equation*}
where $\mu$ denotes the one dimensional Lebesgue measure of $L_{\xi}$. It is easy to see that $\vol(SM) = \vol(M)$.
Also, let $s = \delta (t) = \frac{t^n}{n}$, then $ds = t^{n-1}dt$. For $x \in \mathbb{R}^n$, we have:
\begin{align*}
\int_{M}|\langle x, y \rangle|^p dy & = \int_{\mathbb{S}^{n-1}}\int_{L_{\xi}}|\langle x, t\xi \rangle|^pt^{n-1}dtd\xi \\
& = \int_{\mathbb{S}^{n-1}}\int_{L_{\xi}}|\langle x, \xi \rangle|^pt^pt^{n-1}dtd\xi \\
& = \int_{\mathbb{S}^{n-1}}\int_{\delta(L_{\xi})}|\langle x, \xi \rangle|^p(ns)^{\frac{p}{n}}dsd\xi \\
& = n^{{\frac{p}{n}}}\int_{\mathbb{S}^{n-1}}|\langle x, \xi \rangle|^p\int_{\delta(L_{\xi})}s^{\frac{p}{n}}dsd\xi \\
\end{align*}

On the other hand, we have
\begin{align*}
\int_{SM}|\langle x, y \rangle|^p dy & = \int_{\mathbb{S}^{n-1}}\int_{0}^{\rho_{SM}(\xi)}|\langle x, t\xi \rangle|^pt^{n-1}dtd\xi \\
& = \int_{\mathbb{S}^{n-1}}\int_{0}^{\rho_{SM}(\xi)}|\langle x, \xi \rangle|^pt^pt^{n-1}dtd\xi \\
& = \int_{\mathbb{S}^{n-1}}\int_{0}^{\delta({\rho_{SM}(\xi)})}|\langle x, \xi \rangle|^p(ns)^{\frac{p}{n}}dsd\xi \\
& = n^{\frac{p}{n}} \int_{\mathbb{S}^{n-1}}|\langle x, \xi \rangle|^p\int_{0}^{\mu(\delta(L_{\xi}))}s^{\frac{p}{n}}dsd\xi.
\end{align*}

	By the Bathtub principle (see Theorem 1.14, pag. 28 of \cite{lieb2001analysis}) we have

\[\int_{\delta(L_{\xi})}s^{\frac{p}{n}}ds \geq \int_{0}^{\mu(\delta(L_{\xi}))}s^{\frac{p}{n}}ds\]
therefore, 
	\begin{equation}
		\label{radialsim_ineq}
		\int_{M}|\langle x, y \rangle|^p dy \geq \int_{SM}|\langle x, y \rangle|^p dy.
	\end{equation}
Since $\vol(SM) = \vol(M)$, we obtain $h_{\Gamma_pM}(x)^p \geq h_{\Gamma_pSM}(x)^p$, whence $\Gamma_pM \supset \Gamma_pSM$ and  $\vol(\Gamma_pM) \geq \vol(\Gamma_pSM)$. We conclude,
\begin{equation*}\nonumber
	\vol(\Gamma_pM) \geq \vol(\Gamma_pSM) \geq \vol(SM) = \vol(M).
\end{equation*}

	If $M$ is a compact domain attaining equality in \eqref{bpdc_ineq}, then equality in \eqref{radialsim_ineq} implies $\mu(\delta(L_{\xi})) = {\delta(L_{\xi})}$ for a.e $\xi$, meaning that $M$ is a star body. We conclude the proof recalling the equality case of \eqref{bpm}.
\end{proof}

Let $f$ be a $C^1$ function with compact support in $\mathbb{R}^n$. For $t>0$, consider the level sets of $f$ in $\mathbb{R}^n$:
\begin{equation*}
N_{f,t} = \{x \in \mathbb{R}^n: |f(x)| \geq t\}
\end{equation*}
and
\begin{equation*}
	S_{f,t} = \{x \in \mathbb{R}^n: |f(x)| = t \}.
\end{equation*}

Since $f$ is of class $C^1$, by Sard's Theorem, $S_{f,t}$ is a $C^1$ submanifold which has non-zero normal vector $\nabla f$, for almost all $t$. Denote by $dS_t$ the surface area element of $S_{f,t}$. Then the co-area formula relates the area elements $dx = |\nabla f|^{-1}dS_t dt$.

We present a lemma, whose proof is inside of the proof of Theorem $4.1$ of \cite{Zaffsob}. It will be useful to prove Theorem \ref{mainthm} for the case $r=1$.

\begin{lem}\label{lnf} If $f$ is a $C^1$ function with compact support in $\mathbb{R}^n$, then:
	\begin{equation*}
	\int_{0}^{\infty}\vol(N_{f,t})^{\frac{n-1}{n}}dt \geq ||f||_{\frac{n}{n-1}}.
	\end{equation*}
\end{lem}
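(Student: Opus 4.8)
The plan is to use the layer-cake (bathtub) representation of $|f|$ together with Minkowski's integral inequality, which is precisely the argument extracted from the proof of Theorem~4.1 of \cite{Zaffsob}. For every $x \in \R^n$ we have the identity
\[
|f(x)| = \int_0^\infty \mathbf{1}_{N_{f,t}}(x)\, dt,
\]
since $x \in N_{f,t}$ exactly when $t \leq |f(x)|$. Because $f$ is $C^1$ with compact support it is bounded, so $N_{f,t} = \emptyset$ for $t > \|f\|_\infty$ and $\vol(N_{f,t}) < \infty$ for every $t$; thus the integral above, and all the quantities appearing below, are finite, and the map $(t,x) \mapsto \mathbf{1}_{N_{f,t}}(x)$ is jointly measurable by continuity of $f$.

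Next I would apply Minkowski's integral inequality for the $L^q$ norm with exponent $q = \frac{n}{n-1} \geq 1$, viewing $|f|$ as the integral over $t \in (0,\infty)$ of the non-negative family $x \mapsto \mathbf{1}_{N_{f,t}}(x)$:
\[
\|f\|_{\frac{n}{n-1}} = \left\| \int_0^\infty \mathbf{1}_{N_{f,t}}\, dt \right\|_{\frac{n}{n-1}} \leq \int_0^\infty \left\| \mathbf{1}_{N_{f,t}} \right\|_{\frac{n}{n-1}} dt.
\]
Finally, for each fixed $t$ one computes directly
\[
\left\| \mathbf{1}_{N_{f,t}} \right\|_{\frac{n}{n-1}} = \left( \int_{\R^n} \mathbf{1}_{N_{f,t}}(x)^{\frac{n}{n-1}}\, dx \right)^{\frac{n-1}{n}} = \vol(N_{f,t})^{\frac{n-1}{n}},
\]
and substituting this into the previous display yields exactly $\int_0^\infty \vol(N_{f,t})^{\frac{n-1}{n}}\, dt \geq \|f\|_{\frac{n}{n-1}}$.

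There is essentially no serious obstacle in this proof: it is a one-line consequence of the layer-cake formula and the triangle inequality for the $L^{n/(n-1)}$ norm in integral form. The only points requiring (routine) verification are the joint measurability of the indicator family, already noted above, and the finiteness ensured by the compact support and boundedness of $f$, so that Minkowski's integral inequality applies without any integrability caveats. I would include these remarks briefly for completeness but would not dwell on them.
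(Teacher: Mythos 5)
Your proof is correct and is exactly the argument the paper points to (the layer-cake identity $|f(x)|=\int_0^\infty \mathbf{1}_{N_{f,t}}(x)\,dt$ followed by Minkowski's integral inequality in $L^{n/(n-1)}$, extracted from the proof of Theorem 4.1 of Zhang's affine Sobolev paper). It also accounts for the paper's subsequent remark that the lemma persists for exponents $\eta\in(0,1)$ but not $\eta>1$, since Minkowski's integral inequality requires the exponent $1/\eta\geq 1$.
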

We observe that the proof of Lemma \ref{lnf} carries over replacing $\frac{n-1}{n}$ by any $\eta \in (0,1)$, but not for $\eta > 1$. We prove an analogous result for $\eta = \frac{n+p}{p} > 1$.

\begin{lem}\label{lvnp} If $g$ is a $C^1$ function with compact support in $\mathbb{R}^n$ and $\lambda \in \left(\frac{n}{n+p}, 1\right)\cup (1, \infty)$
	\begin{equation*} \nonumber
	\int_{0}^{\infty} \vol(N_{g,t})^{\frac{n+p}{n}}dt
	\geq a_{n, p,\lambda}||g||^\frac{(n+p)(\lambda - 1) + p}{(\lambda - 1)n}_1||g||^{-\frac{\lambda p}{(\lambda-1)n}}_{\lambda},
	\end{equation*}
	where
	
$$
a_{n, p, \lambda} = \left\{
	\begin{array}{lr}
	A^{-\frac{(n+p)(\lambda - 1) + p}{(\lambda - 1)n}}_{n, p, \lambda} & \text{if } \lambda > 1\\
	B^{\frac{p}{(\lambda -1)n}}_{n, p, \lambda} & \text{if } \lambda \in \left(\frac{n}{n+p}, 1\right)
	\end{array} \right.
$$	
with	

\[
	A_{n, p, \lambda} = ((\lambda -1) n+\lambda  p) \left(\frac{\Gamma \left(\frac{\lambda }{\lambda -1}\right) (\lambda  p)^{\frac{1}{1-\lambda }} ((\lambda -1) (n+p))^{-\frac{n+p}{p}} \Gamma \left(\frac{n}{p}+2\right)}{\Gamma \left(\frac{n}{p}+\frac{1}{\lambda -1}+2\right)}\right)^{\frac{(\lambda -1) p}{(\lambda -1) n+\lambda  p}}
\]
and
\[
	B_{n,p, \lambda} = \lambda \frac{p}{n+p} \left(\lambda -\frac{n}{n+p}\right)^{\frac{(1-\lambda) (n+p)}{p}-1} \left(\frac{(1-\lambda )^{-\frac{n}{p}-2} \Gamma \left(\frac{n}{p}+2\right) \Gamma \left( \frac{\lambda}{1-\lambda}-\frac np \right)}{\Gamma \left(\frac{\lambda -2}{\lambda -1}\right)}\right)^{1-\lambda }
\]

\end{lem}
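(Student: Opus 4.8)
The plan is to reduce the $n$–dimensional statement to a one–dimensional inequality for the distribution function of $g$, and then to establish that inequality by an elementary pointwise Young (Legendre) inequality followed by a two–parameter optimisation. First I would set $\mu(t):=\vol(N_{g,t})$ and record, via the layer–cake formula, the identities $\|g\|_1=\int_0^\infty\mu(t)\,dt$ and $\|g\|_\lambda^\lambda=\int_{\R^n}g^\lambda=\lambda\int_0^\infty t^{\lambda-1}\mu(t)\,dt$ (the latter after the substitution $s=t^\lambda$), together with the observation that the left–hand side of the lemma is literally $\int_0^\infty\mu(t)^{\eta}\,dt$, where $\eta:=\frac{n+p}{n}>1$. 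Hence it suffices to prove, for $g$ as in the statement (in fact only $g\ge0$, $g\in L^1$, $\int g^\lambda<\infty$, $g\not\equiv0$ are used, and the monotonicity of $\mu$ plays no role),
\[
\int_0^\infty\mu(t)^{\eta}\,dt\ \ge\ a_{n,p,\lambda}\,\|g\|_1^{\alpha}\,\|g\|_\lambda^{\beta},\qquad \alpha:=\tfrac{(n+p)(\lambda-1)+p}{(\lambda-1)n},\quad \beta:=-\tfrac{\lambda p}{(\lambda-1)n},
\]
where one checks $\alpha+\beta=1$ and $\alpha+\beta/\lambda=\eta$.

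The pointwise ingredient I would use is that for all $v\ge0$ and all $\ell\in\R$,
\[
v^{\eta}\ \ge\ \ell\,v-(\eta-1)\,\eta^{-\eta/(\eta-1)}\,(\ell_+)^{\eta/(\eta-1)},
\]
with equality precisely when $v=(\ell_+/\eta)^{1/(\eta-1)}$; this is nothing but the convexity of $v\mapsto v^{\eta}-\ell v$, which is trivially $\ge0$ when $\ell\le0$ and is exactly minimised at $(\ell/\eta)^{1/(\eta-1)}$ when $\ell>0$. Applying this with $v=\mu(t)$ and $\ell=\kappa_1+\kappa_2 t^{\lambda-1}$, and integrating in $t$ using the two identities above, one gets for all admissible constants $\kappa_1,\kappa_2$
\[
\int_0^\infty\mu^{\eta}\,dt\ \ge\ \kappa_1\|g\|_1+\kappa_2\!\int_0^\infty t^{\lambda-1}\mu\ -\ (\eta-1)\eta^{-\eta/(\eta-1)}J(\kappa_1,\kappa_2),\quad J(\kappa_1,\kappa_2):=\int_0^\infty\big(\kappa_1+\kappa_2 t^{\lambda-1}\big)_+^{\eta/(\eta-1)}\,dt .
\]
For $J$ to be finite one is forced to take $\kappa_1>0>\kappa_2$ if $\lambda>1$ and $\kappa_1<0<\kappa_2$ if $\lambda\in(\tfrac{n}{n+p},1)$; in the second case the only delicate point is convergence of $J$ as $t\to0^+$, which holds exactly because $\tfrac{\eta}{\eta-1}(1-\lambda)<1$, i.e.\ because $\lambda>\tfrac{n}{n+p}$ — this is the single place the hypothesis on $\lambda$ enters.

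Next I would evaluate $J$ and optimise over $(\kappa_1,\kappa_2)$. In either sign regime the substitution $u=\big|\kappa_2 t^{\lambda-1}/\kappa_1\big|$ turns $J$ into a Beta integral and gives $J(\kappa_1,\kappa_2)=c_\lambda\,|\kappa_1|^{\frac{\eta}{\eta-1}+\frac1{\lambda-1}}|\kappa_2|^{-\frac1{\lambda-1}}$ with an explicit constant $c_\lambda$ (a Beta-function value divided by $|\lambda-1|$), so that $\kappa\mapsto J(\kappa)$ is a Cobb–Douglas function, homogeneous of degree $\tfrac{\eta}{\eta-1}=\tfrac{n+p}{p}$. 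The right–hand side displayed above is then a linear form minus a convex Cobb–Douglas function, hence its supremum over the admissible quadrant is attained at the unique interior critical point; since the Fenchel conjugate of a Cobb–Douglas function is again of Cobb–Douglas type, with the conjugate exponents, a short computation — for which Euler's relation $\kappa_1\partial_{\kappa_1}J+\kappa_2\partial_{\kappa_2}J=\tfrac{n+p}{p}J$ at the critical point is the quickest tool — produces a bound of the form $C_0\,\|g\|_1^{\alpha}\big(\int_0^\infty t^{\lambda-1}\mu\big)^{\beta/\lambda}=C_0\,\lambda^{-\beta/\lambda}\,\|g\|_1^{\alpha}\|g\|_\lambda^{\beta}$. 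The exponents $\alpha,\beta$ are in fact forced, independently of this calculation, by the two scaling invariances $\mu\mapsto\sigma\mu(\rho\,\cdot)$ of the problem (they must satisfy $\alpha+\beta/\lambda=\eta$ and $\alpha+\beta=1$), so the only real work remaining is to pin down the constant $C_0$.

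The last step is to check that $C_0\lambda^{-\beta/\lambda}$ equals the stated $a_{n,p,\lambda}$. The most transparent way is to run the whole chain on the extremal radial functions $g(x)=G_{p,\lambda}(|x|)$, whose distribution functions are $\mu(t)=\omega_n(1-t^{\lambda-1})_+^{n/p}$ when $\lambda>1$ and $\mu(t)=\omega_n(t^{\lambda-1}-1)_+^{n/p}$ when $\lambda<1$: for these, the pointwise inequality of the second step becomes an equality at every $t$ (it is the case $\ell(t)=\eta\,\mu(t)^{\eta-1}$, i.e.\ $\kappa_1=-\kappa_2$ of modulus $\eta\,\omega_n^{p/n}$), so every inequality in the chain is an equality and $C_0\lambda^{-\beta/\lambda}$ equals the ratio of $\int_0^\infty\mu^{\eta}\,dt$ to $\|g\|_1^{\alpha}\|g\|_\lambda^{\beta}$; each of these three integrals is an elementary Beta integral (and for $\lambda<1$ their finiteness is once more exactly the condition $\lambda>\tfrac{n}{n+p}$). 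Simplifying the resulting quotient of Gamma functions via $\Gamma(x+1)=x\Gamma(x)$ and the Beta–Gamma relation then yields $A_{n,p,\lambda}^{-\alpha}$ in the case $\lambda>1$ and $B_{n,p,\lambda}^{p/((\lambda-1)n)}$ in the case $\lambda<1$, which is the asserted value of $a_{n,p,\lambda}$. I expect this final bookkeeping to be the main obstacle: it requires no new idea, but it is fiddly, because the two cases $\lambda\gtrless1$ must be handled separately and several $\Gamma$–ratios have to be manipulated into the stated closed forms.
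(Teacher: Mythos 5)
Your proposal is correct and is essentially the paper's own argument in Legendre--transform packaging: the paper multiplies the trivial pointwise bound $(1-(t/s)^{\lambda-1})_+\ge 1-(t/s)^{\lambda-1}$ by $l(t)=\vol(N_{g,t})$, integrates, applies H\"older with exponents $\tfrac{n+p}{n},\tfrac{n+p}{p}$ to the term $\int_0^\infty l(t)\,p_\lambda(t/s)^{p/n}\,dt$, and minimizes over the single parameter $s$ --- which is exactly your pointwise Young inequality $v^{\eta}\ge \ell v-(\eta-1)\eta^{-\eta/(\eta-1)}(\ell_+)^{\eta/(\eta-1)}$ with $\ell=\kappa_1+\kappa_2 t^{\lambda-1}$, once your two-parameter optimization is split into a scale (the H\"older normalization) and a shape parameter ($s$). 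The only cosmetic difference is that you pin down $a_{n,p,\lambda}$ by running the chain on the extremizers $G_{p,\lambda}$, whereas the paper evaluates the Beta integral $\int_0^\infty p_\lambda(t)^{(n+p)/n}dt$ and the minimum over $s$ explicitly; both routes use the hypothesis $\lambda>\tfrac{n}{n+p}$ in the same place and yield the same constant.
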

\begin{proof}
For $\lambda > 1$ and $t>0$, let $p_{\lambda}(t) = (1-t^{\lambda-1})^{\frac{n}{p}}_+$ and $l(t)= \vol(N_{g,t})$. Then $p_{\lambda}\left(\frac{t}{s}\right) = (1-t^{\lambda-1}s^{1-\lambda})^{\frac{n}{p}}_+$ and
\begin{equation}\label{ppn}
p_{\lambda}\left(\frac{t}{s}\right)^{\frac{p}{n}} \geq 1-t^{\lambda-1}s^{1-\lambda}.
\end{equation}
Multiplying \eqref{ppn} by $l(t)$ and integrating, we obtain:
\begin{equation*}
\int_{0}^{\infty}l(t)p_{\lambda}\left(\frac{t}{s}\right)^{\frac{p}{n}}dt \geq \int_{0}^{\infty}l(t)dt - s^{1-\lambda}\int_{0}^{\infty}l(t)t^{\lambda-1}dt,
\end{equation*}	
whence
\begin{equation*}
||g||_1 \leq \int_{0}^{\infty}l(t)p_{\lambda}\left(\frac{t}{s}\right)^{\frac{p}{n}}dt + s^{1-\lambda}\int_{0}^{\infty}l(t)t^{\lambda-1}dt.
\end{equation*}

By Holder, observe that:

\begin{equation*}
\int_{0}^{\infty}l(t)p_{\lambda}\left(\frac{t}{s}\right)^{\frac{p}{n}}dt \leq  \left(\int_{0}^{\infty} l(t)^{\frac{n+p}{n}}dt\right)^{\frac{n}{n+p}}\left(\int_{0}^{\infty}p_{\lambda}\left(\frac{t}{s}\right)^{\frac{n+p}{n}}dt\right)^{\frac{p}{n+p}}.
\end{equation*}

Write $u = t/s$ and $dt = sdu$. Then:
\begin{equation*}
\int_{0}^{\infty}l(t)p_{\lambda}\left(\frac{t}{s}\right)^{\frac{p}{n}}dt \leq  \left(\int_{0}^{\infty} l(t)^{\frac{n+p}{n}}dt\right)^{\frac{n}{n+p}}\left(\int_{0}^{\infty}p_{\lambda}(u)^{\frac{n+p}{n}}du\right)^{\frac{p}{n+p}}s^{\frac{p}{n+p}}.
\end{equation*}

Now, observe that:
\begin{equation*}
\int_{0}^{\infty}l(t)t^{\lambda-1}dt = \int_{0}^{\infty}\vol(N_{g^{\lambda}, t^{\lambda}})t^{\lambda -1}dt.
\end{equation*}

Write $v = t^{\lambda}$, $dv = \lambda t^{\lambda -1}dt$, then $t^{\lambda -1}dt = \frac{1}{\lambda}dv$ and:
\begin{equation*}
\int_{0}^{\infty}l(t)t^{\lambda-1}dt = \frac{1}{\lambda}\int_{0}^{\infty}\vol(N_{g^{\lambda}, t})dt = \frac{1}{\lambda}||g||^{\lambda}_{\lambda}.
\end{equation*}

Hence,
\begin{equation}\label{minimizar}
||g||_1 \leq \frac{1}{\lambda}||g||^{\lambda}_{\lambda}s^{1-\lambda} + \left(\int_{0}^{\infty} l(t)^{\frac{n+p}{n}}dt\right)^{\frac{n}{n+p}}\left(\int_{0}^{\infty}p_{\lambda}(t)^{\frac{n+p}{n}}dt\right)^{\frac{p}{n+p}}s^{\frac{p}{n+p}} = as^{-\alpha} + bs^{\beta},
\end{equation}
where $a = \frac{1}{\lambda}||g||^{\lambda}_{\lambda}$, $b = \left(\int_{0}^{\infty} l(t)^{\frac{n+p}{n}}dt\right)^{\frac{n}{n+p}}\left(\int_{0}^{\infty}p_{\lambda}(t)^{\frac{n+p}{n}}dt\right)^{\frac{p}{n+p}}$, $\alpha = \lambda - 1$ e $\beta = \frac{p}{n+p}$.
\vspace{0.5cm}

Notice that the right-hand side of $(\ref{minimizar})$ has a unique minimum for $s \in (0, \infty)$, then minimizing with respect to $s \in (0, \infty)$, we obtain:
\begin{equation*}
||g||_1 \leq A_{n, p, \lambda}||g||^{\frac{\lambda p}{(n+p)(\lambda - 1) + p}}_{\lambda}\left(\int_{0}^{\infty} l(t)^{\frac{n+p}{n}}dt\right)^{\frac{(\lambda - 1)n}{(n+p)(\lambda - 1) + p}},
\end{equation*}
where $A_{n, p, \lambda}$ is given in the statement of the Lemma.

Hence,
\begin{equation*}
||g||_1 \leq A_{n, p, \lambda}||g||^{\frac{\lambda p}{(n+p)(\lambda - 1) + p}}_{\lambda}\left(\int_{0}^{\infty} \vol(N_{g,t})^{\frac{n+p}{n}}dt\right)^{\frac{(\lambda - 1)n}{(n+p)(\lambda - 1) + p}},
\end{equation*}
that proves the statement of Lemma for the case $\lambda >1$.

For the case $\lambda \in \left(\frac{n}{n+p}, 1\right)$,
we define $q_{\lambda}(t) = (t^{\lambda-1} - 1)_+^{\frac{n}{p}}$. Then, $q_{\lambda}(t)^{\frac{p}{n}} \geq t^{\lambda-1} - 1$ and $q_{\lambda}\left(\frac{t}{s}\right)^{\frac{p}{n}} \geq t^{\lambda-1}s^{1-\lambda} - 1$.

It follows that

\begin{equation*}
\int_{0}^{\infty}l(t)q_{\lambda}\left(\frac{t}{s}\right)^{\frac{p}{n}}dt \geq s^{1 - \lambda}\int_{0}^{\infty}l(t)t^{\lambda - 1}dt - \int_{0}^{\infty}l(t)dt
\end{equation*}

Since $\int_{0}^{\infty}l(t)t^{\lambda - 1}dt = \frac{1}{\lambda}||g||^{\lambda}_{\lambda}$ and $\int_{0}^{\infty}l(t)dt = ||g||_1$, we obtain

\begin{equation*}
\frac{s^{1 - \lambda}}{\lambda}||g||^{\lambda}_{\lambda} \leq ||g||_1 +	\int_{0}^{\infty}l(t)q_{\lambda}\left(\frac{t}{s}\right)^{\frac{p}{n}}dt.
\end{equation*}

By Holder

\begin{align} \nonumber
\int_{0}^{\infty}l(t)q_{\lambda}\left(\frac{t}{s}\right)^{\frac{p}{n}}dt & \leq  \left(\int_{0}^{\infty} l(t)^{\frac{n+p}{n}}dt\right)^{\frac{n}{n+p}}\left(\int_{0}^{\infty}q_{\lambda}\left(\frac{t}{s}\right)^{\frac{n+p}{n}}dt\right)^{\frac{p}{n+p}} \\ \nonumber
& = \left(\int_{0}^{\infty} l(t)^{\frac{n+p}{n}}dt\right)^{\frac{n}{n+p}}\left(\int_{0}^{\infty}q_{\lambda}(u)^{\frac{n+p}{n}}du\right)^{\frac{p}{n+p}}s^{\frac{p}{n+p}}
\end{align}

Hence,
\begin{equation}\label{minimizar2}
\frac{1}{\lambda}||g||^{\lambda}_{\lambda} \leq s^{ \lambda - 1}||g||_1 + \left(\int_{0}^{\infty} l(t)^{\frac{n+p}{n}}dt\right)^{\frac{n}{n+p}}\left(\int_{0}^{\infty}q_{\lambda}(t)^{\frac{n+p}{n}}dt\right)^{\frac{p}{n+p}}s^{\frac{p}{n+p} + \lambda -1}.	
\end{equation}

For $\lambda \in \left(\frac{n}{n+p}, 1\right)$, the right-hand side of $(\ref{minimizar2})$ has a unique minimum $s \in (0, \infty)$, then minimizing with respect to $s \in (0, \infty)$, we obtain:
\begin{equation*}
	||g||^{\lambda}_{\lambda} \leq B_{n,p, \lambda}||g||^{\frac{(n+p)(\lambda - 1) + p}{p}}_1\left(\int_{0}^{\infty} l(t)^{\frac{n+p}{n}}dt\right)^{\frac{(1 -\lambda)n}{p}},
\end{equation*}
where $B_{n, p, \lambda}$ is given in the statement of the Lemma.

Therefore,
\begin{equation*}
\int_{0}^{\infty} \vol(N_{g,t})^{\frac{n+p}{n}}dt \geq B^{-\frac{p}{(1-\lambda)n}}_{c, d, \lambda}||g||^{-\frac{(n+p)(\lambda - 1) + p}{(1-\lambda)n}}_1 ||g||^{\frac{p\lambda}{(1-\lambda)n}}_{\lambda}.
\end{equation*}
\end{proof}

Now, we present other tools for the case $1< r < n$  of our main result, introduced by Lutwak, Yang, and Zhang in \cite{LYZlpaffsob}. Let $H^{1,r}(\mathbb{R}^n)$ denote the usual Sobolev space of real-valued functions of $\mathbb{R}^n$ with $L_r$ partial derivatives. If $f \in H^{1,r}(\mathbb{R}^n) \cap C^{\infty}(\mathbb{R}^n)$ and $Q$ is a compact convex set that contains the origin in its relative interior, then they define
\begin{equation*}
	V_r(f, t, Q) = \frac{1}{n}\int_{S_{f,t}}h_Q(\nu(x))^r|\nabla f(x)|^{r-1}dS_t(x),
\end{equation*}
where $\nu(x) = \frac{\nabla f(x)}{|\nabla f(x)|}$. They prove that for almost every $t > 0$, there exists an origin-symmetric convex body $K_t$ such that, for each origin-symmetric convex body Q
\begin{equation}\label{vfc}
V_r(K_t, Q) = V_r(f, t, Q).
\end{equation}
The next Lemma can be deduced from \cite{LYZlpaffsob}, inequalities $(6.3)$, $(5.3)$, $(5.4)$, and $(5.1)$.
\begin{lem}\label{ivpv} If $r \in (1, n)$, $f \in H^{1,r}(\mathbb{R}^n)$ and $q = \frac{nr}{n-r}$, then	
	\begin{equation*}
	\int_{0}^{\infty}\vol(K_t)^{\frac{n-r}{n}}dt \geq n^{\frac{r-n}{n}}c_2^r||f||^r_q,
	\end{equation*}
where
\begin{equation*}
c_2 = n^{\frac{1}{q}}\left(\frac{n-r}{r-1}\right)^{\frac{r-1}{r}}\left(\frac{\Gamma\left(\frac{n}{r}\right)\Gamma\left(n+1 - \frac{n}{r}\right)}{\Gamma(n)}\right)^{\frac{1}{n}}.
\end{equation*}	
\end{lem}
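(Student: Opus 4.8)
The plan is to deduce Lemma~\ref{ivpv} from the level-set machinery of \cite{LYZlpaffsob} by matching normalizations and combining the estimates quoted there. Let me first record why this lemma is the crux of the $1<r<n$ case of Theorem~\ref{t1vmv}. Applying the co-area formula to $V_r(f,K)=\frac1n\int_{\mathbb R^n}h_K(-\nabla f(x))^r\,dx$, using the $1$-homogeneity of $h_K$, the symmetry of $K$, and the defining property $V_r(K_t,\cdot)=V_r(f,t,\cdot)$, one gets $V_r(f,K)=\int_0^\infty V_r(K_t,K)\,dt$; the $L_r$ Minkowski inequality \eqref{ebvm} applied to each $K_t$ then gives
\[
V_r(f,K)\ \ge\ \vol(K)^{r/n}\int_0^\infty\vol(K_t)^{\frac{n-r}{n}}\,dt ,
\]
so that Lemma~\ref{ivpv} yields Theorem~\ref{t1vmv} for $1<r<n$ with $c_1=n^{(r-n)/(rn)}c_2$. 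Everything thus reduces to the asserted lower bound for $\int_0^\infty\vol(K_t)^{(n-r)/n}\,dt$.

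For that bound I would follow \cite{LYZlpaffsob}. The starting point, read off the co-area identity at the level of $L_r$ surface area measures, is the additivity $S_r(f,\cdot)=\int_0^\infty S_r(K_t,\cdot)\,dt$, equivalently $h_{\Pi_rK_f}(u)^r=\int_0^\infty h_{\Pi_rK_t}(u)^r\,dt$ for the $L_r$ projection bodies, where $K_f$ is the LYZ body of $f$. Their $(5.1)$--$(5.4)$ then supply, in the present language: the construction of $K_t$ via the $L_r$ Minkowski problem; the $L_r$ Petty projection inequality — equivalently \eqref{bpm} with $r$ in place of $p$ — applied to each $K_t$, with ellipsoids as extremals; and a Minkowski-type integral inequality for polar volume radii, i.e. the concavity of $F\mapsto(\int_{\mathbb S^{n-1}}F^{-n/r})^{-r/n}$, which converts the additivity above into a comparison between the polar volume radius of $\Pi_rK_f$ and the integral over $t$ of those of the $\Pi_rK_t$. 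Their $(6.3)$ is the sharp affine $L_r$ Sobolev inequality $\vol(\Pi_r^*K_f)^{-1/n}\ge c_{n,r}\|f\|_q$. Assembling these, and simplifying the resulting product of $\omega_m$'s and $\Gamma$-factors by the duplication and reflection formulas for $\Gamma$, one arrives at $\int_0^\infty\vol(K_t)^{(n-r)/n}\,dt\ge n^{(r-n)/n}c_2^r\|f\|_q^r$, with $c_2$ exactly the Bliss/Talenti radial $L_r$-Sobolev constant in the statement; equality forces all the $K_t$ to be a common ellipsoid and $f$ to be a rescaled Bliss/Talenti profile $F_r(t)=(1+t^{r/(r-1)})^{1-r/n}$, matching the equality case of Theorem~\ref{t1vmv}.

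The main obstacle is not a new idea but the arrangement and the bookkeeping. The three inequalities above must be combined so that the net conclusion is a \emph{lower} bound on $\int_0^\infty\vol(K_t)^{(n-r)/n}\,dt$ — the affine Petty projection / Busemann--Petty input enters precisely to replace each level body $K_t$ by its volume in an affine-invariant way, and the polar-volume-radius and Sobolev inequalities must be oriented consistently with it; this ordering is exactly the content of the cited passage of \cite{LYZlpaffsob}. Separately one must fix one normalization of $\Pi_r$ (the $c_{n,p}$-type factor of the excerpt) and carry it faithfully through all three steps so that the composite constant collapses to precisely $n^{(r-n)/n}c_2^r$. Finally one should note the routine analytic points that make the co-area decomposition rigorous: it suffices to treat $f\in C^\infty\cap H^{1,r}$ by density, and by Sard's theorem $S_{f,t}$ is a $C^1$ hypersurface on which $\nabla f\ne 0$ for a.e. $t$, so the measures $S_r(K_t,\cdot)$, hence the bodies $K_t$, are defined for a.e. $t$ and the exchange of $\int_{\mathbb R^n}$ with $\int_0^\infty$ is legitimate.
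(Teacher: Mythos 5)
Your opening reduction of Theorem \ref{t1vmv} to the lemma is fine, but it is not what was asked, and the proof you sketch for the lemma itself has the logical arrows reversed. Your chain is: $L_r$ Petty projection inequality applied to each $K_t$, then a Minkowski-type integral inequality for polar volume radii applied to the additivity $h_{\Pi_r K_f}^r=\int_0^\infty h_{\Pi_r K_t}^r\,dt$, then the affine $L_r$ Sobolev inequality $\vol(\Pi_r^\ast K_f)^{-1/n}\ge c\,\|f\|_q$. But the first two ingredients point the wrong way for a \emph{lower} bound on $\int_0^\infty\vol(K_t)^{(n-r)/n}\,dt$. The $L_r$ Petty projection inequality reads $\vol(\Pi_r^\ast K)\,\vol(K)^{(n-r)/r}\le\omega_n^{n/r}$, i.e. $\vol(K_t)^{(n-r)/n}\le\omega_n\,\vol(\Pi_r^\ast K_t)^{-r/n}$: it \emph{upper}-bounds the volume term. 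And the relevant integral inequality is the \emph{super}additivity of $G\mapsto\bigl(\int_{\mathbb{S}^{n-1}}G^{-n/r}\bigr)^{-r/n}$ (reverse Minkowski, exponent $-n/r<0$), which gives $\int_0^\infty\vol(\Pi_r^\ast K_t)^{-r/n}\,dt\le\vol(\Pi_r^\ast K_f)^{-r/n}$ — again an upper bound. Chaining these with the affine Sobolev inequality produces two quantities each dominated by $\vol(\Pi_r^\ast K_f)^{-r/n}$, from which no lower bound on $\int_0^\infty\vol(K_t)^{(n-r)/n}\,dt$ can be extracted. In fact in \cite{LYZlpaffsob} the implication runs the other way: the present lemma is an \emph{ingredient} of the affine $L_r$ Sobolev inequality (one first proves $\int_0^\infty\vol(K_t)^{(n-r)/n}dt\ge c\|f\|_q^r$ and then feeds it into Petty and the reverse Minkowski step), so deriving it from the affine Sobolev inequality is circular.

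The cited estimates actually combine without any projection bodies. Writing $u(t)=\vol(N_{f,t})$, the defining identity $nV_r(K_t,Q)=\int_{S_{f,t}}h_Q(\nu)^r|\nabla f|^{r-1}\,d\mathcal{H}^{n-1}$ together with H\"older (exponents $r$ and $\tfrac{r}{r-1}$) and the co-area identity $\int_{S_{f,t}}|\nabla f|^{-1}d\mathcal{H}^{n-1}=-u'(t)$ gives
\[
nV_1(N_{f,t},Q)\ \le\ \bigl(nV_r(K_t,Q)\bigr)^{1/r}\bigl(-u'(t)\bigr)^{\frac{r-1}{r}} ,
\]
and the Minkowski inequality for compact domains (Lemma \ref{bmvm}) with the choice $Q=K_t$ (so that $V_r(K_t,K_t)=\vol(K_t)$) then yields
\[
\vol(K_t)^{\frac{n-r}{n}}\ \ge\ n^{\,r-1}\,u(t)^{\frac{r(n-1)}{n}}\bigl(-u'(t)\bigr)^{1-r}
\]
for a.e. $t$. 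Integrating in $t$ and invoking the sharp one-dimensional Bliss--Talenti inequality $\int_0^\infty u^{\frac{r(n-1)}{n}}(-u')^{1-r}dt\ \ge\ C\bigl(q\int_0^\infty t^{q-1}u\,dt\bigr)^{r/q}=C\|f\|_q^r$ — this is where the constant $c_2$, with its factor $\bigl(\tfrac{n-r}{r-1}\bigr)^{(r-1)/r}$ and the Gamma quotient, originates — gives the lemma. You should rebuild your argument along these lines; as written, the central chain does not close.
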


\section{Proof of the main results}\label{sec:MainRes}
We present separate proofs for the cases $1< r <n$ and $r=1$.
\subsection{Case $1 < r < n$:}
\begin{proof}[Proof of \ref{t1vmv}]

By the co-area formula, \eqref{vfc}, \eqref{ebvm} and Lemma \ref{ivpv}:
\begin{align*} \nonumber
V_r(f, K) 
	& = \frac 1n \int_{\R^n} h_K(-\nabla f(x))^r dx \\
	& = \frac 1n \int_{0}^{\infty}\int_{S_{f,t}} h_{K}(n^{S_{f,t}}_x)^r|\nabla f(x)|^{r-1}dS_{f,t} dt \\ \nonumber
& = \int_{0}^{\infty}V_r(f, t, K)dt \\ \nonumber
& = \int_{0}^{\infty}V_r(K_t, K)dt \\ \nonumber
& \geq \int_{0}^{\infty}\vol(K_t)^{\frac{n-r}{n}}\vol(K)^{\frac{r}{n}}dt \\ \nonumber
& = \int_{0}^{\infty}\vol(K_t)^{\frac{n-r}{n}}dt \vol(K)^{\frac{r}{n}}\\ 
& \geq n^{\frac{r-n}{n}}c_2^r||f||^r_q\vol(K)^{\frac{r}{n}}
\end{align*}
\end{proof}

\begin{proof}[Proof of \ref{taux}]
	We may observe that:
	\begin{align} \nonumber
	h_{M_pg}(\xi)^p 
		&= \int_{\mathbb{R}^n}g(x)|\langle x, \xi \rangle|^pdx \\ \nonumber
		&= \int_{0}^{\infty}\int_{\{g \geq t\}}|\langle x, \xi \rangle|^pdxdt \\ \nonumber
		&= \int_{0}^{\infty}h_{M_pN_{g,t}}(\xi)^pdt.
	\end{align}
	
	In this sense, we regard $M_pg$ as a generalized $p$-sum of sets, where we replace finite $p$-sums by a $p$-integral of sets
	\[M_pg = \int_p M_pN_{g,t} dt\]
	and clearly, for any convex body $K$,
	\[V_p\left(K, \int_p M_pN_{g,t} dt\right) = \int_0^\infty V_p(K, M_pN_{g,t}) dt.\]
	We compute:
	\begin{align*}
		\vol(M_pg)
		&= V_p(M_pg, M_pg)\\
		&= V_p\left(M_pg, \int_p M_pN_{g,t} dt\right)\\
		&= \int_0^\infty V_p(M_pg, M_p N_{g,t}) dt\\
		&\geq \vol(M_pg)^{\frac{n-p}n} \int_0^\infty \vol(M_p N_{g,t})^{p/n} dt\\
	\end{align*}

	Then using Lemmas \ref{bpdc} and \ref{lvnp}, it follows that
	\begin{align}\nonumber
	\vol(M_pg)^{\frac{p}{n}} 
	& \geq \int_{0}^{\infty}\vol(M_pN_{g,t})^{\frac{p}{n}}dt \\ \nonumber
	& \geq c_{n,p}\int_{0}^{\infty}\vol(N_{g,t})^{\frac{n+p}{n}}dt \\ \nonumber
	& \geq  c_{n,p}a_{n,p,\lambda}||g||^\frac{(n+p)(\lambda - 1) + p}{(\lambda - 1)n}_1||g||^{-\frac{\lambda p}{(\lambda-1)n}}_{\lambda},
	\end{align}
where $a_{n,p,\lambda}$ is given by Lemma \eqref{lvnp}.

\end{proof}

\subsection{Proof of Theorem \ref{mainthm}: Case $r=1$}
\begin{proof}
Let $V_1(f, M_pg) = \frac{1}{n} \int_{\mathbb{R}^n}\left(\int_{{R}^n}g(y)|\langle \nabla f(x), y \rangle|^pdy\right)^{1/p}dx$. Then,

\begin{equation*}
	V_1(f, M_pg) = \frac{1}{n} \int_{0}^{\infty}\int_{S_{f,t}}\left(\int_{{R}^n}g(y)\left|\left\langle \frac{\nabla f(x)}{|\nabla f(x)|}, y \right\rangle \right|^pdy\right)^{1/p}dS_t dt.
\end{equation*}

We denote $\eta^{S_t}_{x} = \frac{\nabla f(x)}{|\nabla f(x)|}$, as
\begin{align*}
h_{M_pg}(\eta^{S_t}_x) & =  \left(\int_{{R}^n}g(y)\left|\left\langle \eta^{S_t}_x, y \right\rangle \right|^pdy\right)^{1/p} \\
& =  \left(\int_{0}^{\infty}\int_{N_{g,s}}\left|\left\langle \eta^{S_t}_x, y \right\rangle \right|^pdyds\right)^{1/p}, 
\end{align*}
it follows that:
\begin{align*}
V_1(f, M_pg) & = \frac{1}{n} \int_{0}^{\infty}\int_{S_{f,t}}h_{M_pg}(\eta^{S_t}_x)dS_tdt \\
& = \frac{1}{n}\int_{0}^{\infty}\int_{S_{f,t}} \left(\int_{0}^{\infty}\int_{N_{g,s}}\left|\left\langle \eta^{S_t}_x, y \right\rangle \right|^pdyds\right)^{1/p}dS_tdt 
\end{align*}

Write $h_{M_pN_{g,s}}(\eta^{S_t}_x)^p = \int_{N_{g,s}}\left|\left\langle \eta^{S_t}_x, y \right\rangle \right|^pdy$, then:

\begin{equation*}
V_1(f, M_pg) = \frac{1}{n}\int_{0}^{\infty}\int_{S_{f,t}} \left(\int_{0}^{\infty}h_{M_pN_{g,s}}(\eta^{S_t}_x)^p ds\right)^{1/p}dS_tdt
\end{equation*}

By the co-area formula, the Minkowski integral inequality and Lemmas \ref{bmvm}, \ref{bpdc}, \ref{ivpv} and \ref{lvnp}:
\begin{align*} \nonumber
V_1(f, M_pg) & \geq \frac{1}{n} \int_{0}^{\infty}\left(\int_{0}^{\infty}\left(\int_{S_{f,t}} h_{M_pN_{g,s}}(\eta^{S_t}_x)dS_t\right)^{p}ds\right)^{\frac{1}{p}}dt \\ \nonumber
&\geq \frac{1}{n}\left( \int_{0}^{\infty}\left(\int_{0}^{\infty}\left(\int_{S_{f,t}} h_{M_pN_{g,s}}(\eta^{S_t}_x)dS_t\right)dt\right)^{p}ds\right)^{\frac{1}{p}}
\\ \nonumber
& = \left(\int_{0}^{\infty}\left(\int_{0}^{\infty}V_{1}(N_{f,t}, M_pN_{g,s})dt\right)^pds\right)^{\frac{1}{p}} \\ \nonumber
& \geq  \left(\int_{0}^{\infty}\left(\int_{0}^{\infty}\vol(N_{f,t})^{\frac{n-1}{n}}\vol(M_pN_{g,s})^{\frac{1}{n}}dt\right)^pds\right)^{\frac{1}{p}}\\ \nonumber
& = \left(\int_{0}^{\infty}\left(\int_{0}^{\infty}\vol(N_{f,t})^{\frac{n-1}{n}}dt\right)^p\vol(M_pN_{g,s})^{\frac{p}{n}}ds\right)^{\frac{1}{p}} \\ \nonumber
& = \left(\int_{0}^{\infty}\vol(N_{f,t})^{\frac{n-1}{n}}dt\right)\left(\int_{0}^{\infty}\vol(M_pN_{g,s})^{\frac{p}{n}}ds\right)^{\frac{1}{p}} \\ \nonumber
& \geq c^{\frac{1}{p}}_{n,p}\left(\int_{0}^{\infty}\vol(N_{f,t})^{\frac{n-1}{n}}dt\right)\left(\int_{0}^{\infty}\vol(N_{g,s})^{\frac{n+p}{n}}ds\right)^{\frac{1}{p}} \\
& \geq c^{1/p}_{n,p}||f||_{\frac{n}{n-1}}C^{-\frac{(n+p)(\lambda - 1) + p}{(\lambda - 1)np}}_{n, p, \lambda}||g||^\frac{(n+p)(\lambda - 1) + p}{(\lambda - 1)np}_1||g||^{-\frac{\lambda}{(\lambda-1)n}}_{\lambda}.
\end{align*}
\end{proof}

\begin{rmk}
	Let us point out that a simpler proof of Theorem \ref{mainthm} for the case $r = p$ can be deduced using the $L_p$ Affine Sobolev inequality \cite{LYZlpaffsob} and the equivalence between the $L_p$ Busemann-Petty centroid inequality and the $L_p$ Petty projection inequality (see \cite{LYZlpaffiso}).
	The well known identity for sets
	\[V_p(L,\Gamma_p K) = \frac{\omega_n}{\vol(K)}\tilde V_{-p}(K, \Pi_p^\circ L),\]
	where $\tilde V_{p}(\cdot,\cdot)$ denotes the $L_p$ dual mixed volume and  $\Pi_p^\circ L$ the $L_p$ polar projection body of $L$, can be extended to functions as
	\[V_p(f,M_p g) = \tilde V_{-p}(g, \Pi_p^\circ f)\]
	where we define 
	\[\tilde V_{-p}(g, L) = \int_{\R^n} ||x||_L^p g(x) dx\]
	and
	\[h(\Pi_p^\circ f, \xi)^p = \int_{\R^n} |\langle \nabla f(x), \xi)\rangle|^p dx.\]
%
	Then an application of the dual mixed volume inequality for functions (Lemma 4.1 in \cite{LYZmoment}) and the $L_p$ Affine Sobolev inequality (which corresponds to the $L_p$ Petty Projection inequality for functions), gives the result.
\end{rmk}

\section*{Acknowledgements} The first author was partially supported by Fapemig, Project APQ-01542-18 and CNPQ grant PQ-301203/2017-2. The second and third authors are partially supported by FAPERJ grant JCNE 236508 and CNPQ grant 428076/2018-1. The second author was also partially supported by CNPQ grant PQ 305650/2016-5 and PUC-Rio programa de incentivo a produtividade em pesquisa. The third author acknowledges the support of the IFMG campus Bambui while conducting this work.

\bibliographystyle{plain}

\bibliography{biblioteca}

\end{document}